\DeclareMathOperator{\rad}{rad}
\newcommand{\eps}{\varepsilon}
\newcommand{\CC}{\mathbb{C}}
\newcommand{\RR}{\mathbb{R}}
\newcommand{\ZZ}{\mathbb{Z}}
\def\multiset#1#2{\ensuremath{\left(\kern-.3em\left(\genfrac{}{}{0pt}{}{#1}{#2}\right)\kern-.3em\right)}}
\theoremstyle{plain}
\newtheorem{thm}{Theorem}
\newtheorem{lemma}[thm]{Lemma}
\newtheorem{cor}[thm]{Corollary}
\newtheorem{conj}[thm]{Conjecture}
\newtheorem{prop}[thm]{Proposition}
\theoremstyle{definition}
\newtheorem{defn}[thm]{Definition}
\theoremstyle{remark}
\numberwithin{equation}{section}
\numberwithin{thm}{section}
\begin{document}

\title{Cyclotomic Coincidences}

\author{Carl Pomerance}
\address{Mathematics Department, Dartmouth College, Hanover, NH 03755}
\email{carl.pomerance@dartmouth.edu}

\author{Simon Rubinstein-Salzedo}
\address{Euler Circle, Palo Alto, CA 94306}
\email{simon@eulercircle.com}

\date{\today}
\maketitle

\begin{abstract}
In this paper, we show that if $m$ and $n$ are distinct positive integers and $x$ is a nonzero real number with $\Phi_m(x)=\Phi_n(x)$, then $\frac{1}{2}<|x|<2$ except when $\{m,n\}=\{2,6\}$ and $x=2$. We also observe that 2 appears to be the largest limit point of the set of values of $x$ for which $\Phi_m(x)=\Phi_n(x)$ for some $m\neq n$.
\end{abstract}

\section{Introduction}

For a positive integer $n$, let $\Phi_n$ denote the $n$th cyclotomic polynomial.
In this paper we consider roots of $\Phi_m(x)-\Phi_n(x)$, where $m,n$ are unequal positive integers.
Our principal theorem is the following.

\begin{thm}\label{thm:principal}
If $m\ne n$ are positive integers and $x$ is a nonzero real number with $\Phi_m(x)=\Phi_n(x)$, then
$\frac12<|x|< 2$, except for $\Phi_2(2)=\Phi_6(2)$.
\end{thm}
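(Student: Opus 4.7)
The plan is to reduce the problem to the regime $x\geq 2$ via standard symmetries and handle that regime by combining analytic bounds with Bang's theorem at $x=2$. For negative $x$, the identities $\Phi_1(x)=x-1$, $\Phi_2(x)=x+1$, and $\Phi_{2k}(x)=\Phi_k(-x)$ for odd $k\geq 3$ turn any coincidence at $x_0\leq -2$ into one at positive $|x_0|\geq 2$ after relabeling (with a small separate treatment when $\{m,n\}\cap\{1,2\}$ is nonempty). For the regime $0<x\leq 1/2$, the palindromic identity $\Phi_n(x)=x^{\varphi(n)}\Phi_n(1/x)$ (valid for $n\geq 2$) reduces the case $\varphi(m)=\varphi(n)$ to a coincidence at $1/x\geq 2$; when $\varphi(m)\neq\varphi(n)$ it transforms into the relation $\Phi_m(y)=y^{\varphi(n)-\varphi(m)}\Phi_n(y)$ at $y\geq 2$, which can be ruled out by the same degree-counting that handles the main case.

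For the main regime $x\geq 2$, the basic estimates are
\[
(x-1)^{\varphi(n)}\leq \Phi_n(x)\leq (x+1)^{\varphi(n)},
\]
strict for $n\geq 2$ on the left and for $n\neq 2$ on the right. If $\Phi_m(x)=\Phi_n(x)$ with $\varphi(m)\leq\varphi(n)$, these force $(x-1)^{\varphi(n)-\varphi(m)}\leq ((x+1)/(x-1))^{\varphi(m)}$, which, once $x$ is bounded away from $2$, caps $\varphi(n)-\varphi(m)$ in terms of $\varphi(m)$ and leaves a short list of candidate pairs; these can be eliminated by a sharper estimate coming from the product expansion $\Phi_n(x)/x^{\varphi(n)}=\prod_\zeta(1-\zeta/x)$, which controls how close $\Phi_n(x)$ is to $x^{\varphi(n)}$. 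For pairs with $\varphi(m)=\varphi(n)$, $m\neq n$, the difference $\Phi_m-\Phi_n$ has degree at most $\varphi(n)-1$ with integer coefficients; a Cauchy-type bound on its real zeros, combined with its palindromic symmetry (which forces real roots to come in reciprocal pairs), allows a finite check to rule out real roots with $|x|\geq 2$.

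The decisive step is $x=2$ itself, where the elementary bounds become trivial. Here I would invoke Bang's theorem: for every $n\notin\{1,6\}$, $\Phi_n(2)$ has a prime factor $p$ with $\ord_p(2)=n$. If $\Phi_m(2)=\Phi_n(2)$ with $m\neq n$ and $m,n\notin\{1,6\}$, then $p\mid\Phi_m(2)\mid 2^m-1$, so $n=\ord_p(2)\mid m$; by symmetry $m\mid n$, forcing $m=n$. A finite check of the cases $\{m,n\}\cap\{1,6\}\neq\emptyset$ isolates $\Phi_2(2)=\Phi_6(2)=3$ as the unique coincidence at $x=2$. A continuity argument, together with the correct sign of the leading term of $\Phi_n-\Phi_m$ at infinity, then propagates this to a neighborhood $[2,2+\delta]$, closing the gap between $x=2$ and the range where the analytic bounds already suffice. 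The principal obstacle is precisely $x=2$: the analytic bounds just barely fail there, and the arithmetic of $2^n-1$ via Bang's theorem is what isolates $\{2,6\}$ as the unique exception.
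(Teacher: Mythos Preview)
Your plan has two genuine gaps, both located exactly at the point you flag as ``the principal obstacle.''

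\textbf{The continuity step near $x=2$ is not uniform.} Bang's theorem handles the single point $x=2$, and for each individual pair $(m,n)$ continuity gives a root-free interval $[2,2+\delta_{m,n}]$. But there are infinitely many pairs, and nothing in your outline bounds $\delta_{m,n}$ away from $0$. Your analytic inequality $(x-1)^{\varphi(n)-\varphi(m)}\le\bigl((x+1)/(x-1)\bigr)^{\varphi(m)}$ degenerates as $x\to2^+$: the ``short list of candidate pairs'' it produces has length tending to infinity. So the two regimes never meet. The paper's Section~7 in fact exhibits pairs $(m,n)$ for which $\Phi_m-\Phi_n$ has a real root arbitrarily close to $2$, so this is not an artificial worry.

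\textbf{The equal-degree case is circular.} For $\varphi(m)=\varphi(n)$ you reduce $0<x\le\tfrac12$ to $x\ge2$ via the palindrome identity, and then for $x\ge2$ you invoke the same palindrome symmetry (``real roots come in reciprocal pairs'') to push back to $x\le\tfrac12$. The accompanying ``Cauchy-type bound'' and ``finite check'' do not rescue this: there are infinitely many pairs $m\ne n$ with $\varphi(m)=\varphi(n)$, and the coefficients of $\Phi_m-\Phi_n$ are unbounded, so a root bound depending on coefficient size gives nothing uniform.

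The paper's argument avoids both problems, and in particular does \emph{not} use Bang's theorem for Theorem~\ref{thm:principal}. It proves the sharper inequality $\tfrac12 x^{\varphi(n)}\le\Phi_n(x)<2x^{\varphi(n)}$ for all real $x\ge2$ (Corollary~\ref{cor:ineq}, after Theorem~\ref{thm:ineq}); this holds \emph{at} $x=2$ and immediately disposes of every pair with $|\varphi(m)-\varphi(n)|\ge2$. The technical heart is then a \emph{direct} proof that $\Phi_m(x)\ne\Phi_n(x)$ for $0<x\le\tfrac12$ (Theorem~\ref{thm:gmnlemma}), carried out via $\log\Phi_n(x)=\sum_{d\mid n}\mu(n/d)\log(1-x^d)$ and the dominance inequality of Lemma~\ref{prop:ineq}, with several cases according to $q(m),q(n)$. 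Only after that is the palindrome identity invoked, in the non-circular direction: for $\varphi(m)=\varphi(n)\ge4$ and $x\ge2$, a coincidence would force one at $1/x\le\tfrac12$, already excluded. Your proposed reductions run the logic backwards and therefore never reach a base case.
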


We show that on the prime $k$-tuples conjecture the upper bound 2 in the theorem is optimal
in that replacing it with $2-\eps$ for any fixed $\eps>0$, there are infinitely many
counterexamples.

A corollary of Theorem \ref{thm:principal} is the cyclotomic ordering conjecture of
Glasby.  He conjectured that if $m,n$ are positive integers, then either $\Phi_m(q)\le \Phi_n(q)$ for
all integers $q\ge2$ or the reverse inequality holds for all $q$.  This would put a total ordering
on the set of cyclotomic polynomials.  This ordering is also the topic of the sequence A206225 in
the On-Line Encyclopedia of Integer Sequences~\cite{oeis}, where it seems to be tacitly assumed such a total
ordering exists.

In addition, Glasby conjectured that in the total ordering of the cyclotomic polynomials,
$\Phi_{2\cdot3^i}$ is adjacent to $\Phi_{3^i}$ for all $i\ge2$.  We prove a generalization
of this, where 3 may be replaced with any odd prime; see Proposition \ref{prop:pairs}.

\section{Background on Cyclotomic Polynomials}

\begin{defn} For a positive integer $n$, the \emph{$n^\text{th}$ cyclotomic polynomial} $\Phi_n(x)$ is defined as
\[\Phi_n(x)=\prod_{\substack{1\le a\le n \\ \gcd(a,n)=1}} (x-\zeta_n^a),\] where $\zeta_n$ is a primitive $n^\text{th}$ root of unity. \end{defn}

  Let $\phi(n)$ denote Euler's
function at the positive integer $n$, let $\mu(n)$ be the M\"obius function at $n$,
and let $\omega(n)$ denote the number of distinct primes that
divide $n$.  Also, let $\rad(n)$ denote the largest squarefree divisor of $n$ and $q(n)=\frac{n}{\rad(n)}$.
Some familiar facts about cyclotomic polynomials are as follows.

\begin{prop} The degree of $\Phi_n(x)$ is $\phi(n)$. Further, $\Phi_n(x)$ is an irreducible polynomial 
in $\ZZ[x]$.\end{prop}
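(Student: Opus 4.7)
The degree claim is immediate from the definition: the product defining $\Phi_n(x)$ is indexed by the set $\{1 \le a \le n : \gcd(a,n) = 1\}$, whose cardinality is $\phi(n)$ by the definition of Euler's function, so $\Phi_n(x)$ is monic of degree $\phi(n)$.

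For the irreducibility assertion I would follow the classical Gauss--Dedekind approach. The first step is to establish that $\Phi_n(x) \in \ZZ[x]$ by strong induction on $n$ using the factorization
\[
x^n - 1 \;=\; \prod_{d \mid n} \Phi_d(x).
\]
Granting $\Phi_d \in \ZZ[x]$ for all proper divisors $d$ of $n$, the polynomial $\Phi_n$ is the quotient in $\QQ[x]$ of a monic integer polynomial by another monic integer polynomial, hence has integer coefficients; the base case $\Phi_1 = x-1$ is clear.

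Next, fix a primitive $n$th root of unity $\zeta$ and let $f \in \ZZ[x]$ be its minimal polynomial over $\QQ$ (monic with integer coefficients by Gauss's lemma). Since $f \mid \Phi_n$ in $\QQ[x]$ and both are monic, Gauss's lemma gives $\Phi_n = f g$ with $g \in \ZZ[x]$. The heart of the argument is the following claim: for every prime $p$ with $p \nmid n$ and every root $\alpha$ of $f$ in $\CC$, the power $\alpha^p$ is also a root of $f$. Granting the claim, every $\zeta^a$ with $\gcd(a,n)=1$ is obtained from $\zeta$ by successively raising to primes $p \nmid n$, so $f$ has at least $\phi(n) = \deg \Phi_n$ roots, forcing $f = \Phi_n$.

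The main obstacle is proving the claim, and this is where I would spend the real effort. Suppose $f(\alpha^p) \ne 0$. Then $g(\alpha^p) = 0$, so $\alpha$ is a root of $g(x^p)$, whence $f(x) \mid g(x^p)$ in $\ZZ[x]$. Reducing modulo $p$ and invoking the Frobenius identity $g(x^p) \equiv g(x)^p \pmod{p}$, we find that $\bar f$ divides $\bar g^p$ in $\FF_p[x]$, so $\bar f$ and $\bar g$ share a common irreducible factor. Consequently the image of $x^n - 1 = \bar f \bar g$ in $\FF_p[x]$ has a repeated irreducible factor, contradicting the separability of $x^n - 1$ over $\FF_p$ for $p \nmid n$ (its derivative $n x^{n-1}$ is coprime to $x^n - 1$). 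This contradiction proves the claim, and hence the proposition. The remaining checks --- that the iteration of the claim really produces all primitive $n$th roots of unity, and that irreducibility in $\QQ[x]$ for a primitive monic polynomial implies irreducibility in $\ZZ[x]$ --- are routine.
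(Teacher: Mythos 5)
Your argument is correct, and it is the standard Gauss--Dedekind proof of the irreducibility of $\Phi_n$; the paper itself states this proposition as background without proof, so there is no in-paper argument to compare against. One small slip in wording: you write that ``the image of $x^n-1=\bar f\bar g$ in $\FF_p[x]$ has a repeated irreducible factor,'' but of course $fg=\Phi_n$, not $x^n-1$; what you mean is that $\bar f\bar g=\overline{\Phi_n}$ divides $x^n-1$ in $\FF_p[x]$, so a common irreducible factor of $\bar f$ and $\bar g$ forces a repeated factor of $x^n-1$ modulo $p$, contradicting its separability when $p\nmid n$. With that phrasing fixed (and the tacit observation that $\alpha^p$ is again a primitive $n$th root of unity, so that $\Phi_n(\alpha^p)=0$ and hence $g(\alpha^p)=0$ when $f(\alpha^p)\ne0$), the proof is complete and correct.
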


\begin{prop} \label{prop:mobiusphi} We have 
\[x^n-1=\prod_{d|n}\Phi_d(x)~\hbox{ and }~\Phi_n(x)=\prod_{d\mid n} (x^d-1)^{\mu(\frac{n}{d})}.\] 
When $n>1$, the latter equality can be rewritten as \[\Phi_n(x)=\prod_{d\mid n}(1-x^d)^{\mu(\frac{n}{d})}.\]
\end{prop}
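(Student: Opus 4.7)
The plan is to establish the three identities in order, using only the definition of $\Phi_n(x)$ and basic Möbius inversion. First, I would prove $x^n-1=\prod_{d\mid n}\Phi_d(x)$ by classifying the $n$th roots of unity according to their exact order. Every root of $x^n-1$ is a primitive $d$th root of unity for a unique $d\mid n$, and conversely every primitive $d$th root of unity with $d\mid n$ satisfies $x^n=1$. Grouping the linear factors of $x^n-1=\prod_{\zeta^n=1}(x-\zeta)$ by the order of $\zeta$ gives exactly $\prod_{d\mid n}\Phi_d(x)$. A degree check $n=\sum_{d\mid n}\phi(d)$ serves as a sanity verification.

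Next, for the Möbius-inverted form, I would apply multiplicative Möbius inversion. If one sets $f(n)=x^n-1$ and $g(n)=\Phi_n(x)$, the first identity reads $f(n)=\prod_{d\mid n}g(d)$, so the standard multiplicative Möbius inversion gives $g(n)=\prod_{d\mid n}f(d)^{\mu(n/d)}$, which is the second identity. Alternatively, one can derive this by induction on $n$: the base case $n=1$ is $\Phi_1(x)=x-1$, and for $n>1$ one isolates $\Phi_n(x)$ on the left of the first identity and substitutes recursively, using $\sum_{d\mid n}\mu(n/d)[\text{something}]$ manipulations. The Möbius inversion route is cleanest and avoids bookkeeping.

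Finally, to pass to the form $\Phi_n(x)=\prod_{d\mid n}(1-x^d)^{\mu(n/d)}$ when $n>1$, I would pull a factor of $-1$ out of each $x^d-1=-(1-x^d)$, obtaining the sign $(-1)^{\sum_{d\mid n}\mu(n/d)}$ in front. The well-known identity $\sum_{d\mid n}\mu(d)=0$ for $n>1$ makes this sign equal to $1$, so the two products coincide. This is the only place where the hypothesis $n>1$ is actually used.

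There is no serious obstacle here; these are textbook facts, and the only subtle point is remembering that $\sum_{d\mid n}\mu(d)=0$ fails at $n=1$, which forces the case split in the final statement. I would write the proof in a few lines, citing Möbius inversion directly rather than re-deriving it.
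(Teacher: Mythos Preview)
Your proposal is correct and is exactly the standard textbook argument. Note, however, that the paper does not actually supply a proof of this proposition: it is listed in Section~2 among ``familiar facts about cyclotomic polynomials'' and stated without proof, so there is nothing to compare against beyond observing that your argument is the usual one.
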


\begin{prop}\label{prop:recip}
When $n>1$, $\Phi_n(x)$ is a reciprocal polynomial; i.e., $\Phi_n(x)=x^{\phi(n)}\Phi_n(1/x)$.\end{prop}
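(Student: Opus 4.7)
The plan is to derive the identity directly from the M\"obius-product formulas for $\Phi_n$ in Proposition \ref{prop:mobiusphi}. For $n>1$ both $\Phi_n(x)=\prod_{d\mid n}(x^d-1)^{\mu(n/d)}$ and $\Phi_n(x)=\prod_{d\mid n}(1-x^d)^{\mu(n/d)}$ are available, and the key observation is that substituting $x\mapsto 1/x$ interchanges these two forms up to a clean power of $x$, which will turn out to be exactly $x^{\phi(n)}$.

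Concretely, I would start from the ``$1-x^d$'' form and substitute $x\mapsto 1/x$. Each factor becomes $1-x^{-d}=(x^d-1)/x^d$, so the whole product rewrites as $\prod_{d\mid n}\frac{(x^d-1)^{\mu(n/d)}}{x^{d\mu(n/d)}}$. The numerator $\prod_{d\mid n}(x^d-1)^{\mu(n/d)}$ is exactly $\Phi_n(x)$ by the other form in Proposition \ref{prop:mobiusphi}, and the denominator is $x^{\sum_{d\mid n}d\mu(n/d)}=x^{\phi(n)}$ by the standard M\"obius identity $\phi=\mathrm{id}*\mu$. (If preferred, this identity can be read off immediately by equating degrees on the two sides of Proposition \ref{prop:mobiusphi}.) Therefore $\Phi_n(1/x)=\Phi_n(x)/x^{\phi(n)}$, and clearing denominators gives the stated reciprocal identity.

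The only subtlety is that the ``$1-x^d$'' form of Proposition \ref{prop:mobiusphi} requires $n>1$, but this is precisely the hypothesis at hand, so no difficulty arises. A more conceptual alternative would be to use that $a\mapsto -a$ permutes $(\ZZ/n\ZZ)^\times$, hence $\zeta\mapsto \zeta^{-1}$ permutes the primitive $n^\text{th}$ roots of unity, so the root multisets of $\Phi_n(x)$ and $x^{\phi(n)}\Phi_n(1/x)$ coincide; since both polynomials are monic of degree $\phi(n)$, they must then be equal. I do not anticipate a real obstacle here: all of the substantive content is already packaged in Proposition \ref{prop:mobiusphi}, leaving only routine bookkeeping of powers of $x$.
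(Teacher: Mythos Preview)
Your argument is correct: substituting $x\mapsto 1/x$ in the $(1-x^d)$ form of Proposition~\ref{prop:mobiusphi} and using $\sum_{d\mid n}d\,\mu(n/d)=\phi(n)$ recovers the $(x^d-1)$ form divided by $x^{\phi(n)}$, giving the identity. The alternative via the involution $\zeta\mapsto\zeta^{-1}$ on primitive $n$th roots of unity is equally valid.

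There is nothing to compare against, however: the paper states Proposition~\ref{prop:recip} as one of several ``familiar facts about cyclotomic polynomials'' in the background section and offers no proof of it. Your write-up supplies a clean justification where the paper simply cites the result as standard.
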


\begin{prop} \label{prop:Phinp} If $p$ is prime and $p\mid n$, then $\Phi_{np}(x)=\Phi_n(x^p)$. 
In general, $\Phi_n(x)=\Phi_{\rad(n)}(x^{q(n)})$.\end{prop}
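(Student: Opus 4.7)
The plan is to derive both assertions directly from the Möbius product expansion $\Phi_n(x)=\prod_{d\mid n}(x^d-1)^{\mu(n/d)}$ provided by Proposition~\ref{prop:mobiusphi}. The key observation throughout is that only those divisors $d$ for which $n/d$ is squarefree contribute nontrivially, which severely restricts the index set.

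For the first assertion, I would expand
\[
\Phi_n(x^p)=\prod_{d\mid n}(x^{pd}-1)^{\mu(n/d)}
\]
and reindex via $d'=pd$. Because $p\mid n$, the map $d\mapsto pd$ is a bijection from the divisors of $n$ onto those divisors of $np$ that are multiples of $p$, and under this bijection $\mu(n/d)=\mu(np/d')$. Hence $\Phi_n(x^p)=\prod_{d'\mid np,\,p\mid d'}(x^{d'}-1)^{\mu(np/d')}$. The remaining step is to observe that the missing divisors contribute trivially: if $d'\mid np$ but $p\nmid d'$, then since $p^2\mid np$ the quotient $np/d'$ is divisible by $p^2$, so $\mu(np/d')=0$. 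Extending the product to all divisors of $np$ therefore yields exactly $\Phi_{np}(x)$.

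For the second assertion, I would apply the same Möbius expansion to $\Phi_n(x)$ itself and identify the surviving divisors. Writing $n=\prod_i p_i^{a_i}$, the condition $\mu(n/d)\ne 0$ forces $v_{p_i}(d)\in\{a_i-1,a_i\}$ for every $i$, which is equivalent to $d=q(n)\,e$ for some divisor $e$ of $\rad(n)$. Under this substitution $n/d=\rad(n)/e$ and $x^d=(x^{q(n)})^e$, so the expansion collapses to
\[
\Phi_n(x)=\prod_{e\mid\rad(n)}\bigl((x^{q(n)})^e-1\bigr)^{\mu(\rad(n)/e)}=\Phi_{\rad(n)}(x^{q(n)}).
\]

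I do not anticipate any serious obstacle; the proof is a formal manipulation of the Möbius product, and the only care point is the bookkeeping of exponents together with the verification that the non-$p$-multiple divisors in the first part genuinely drop out. Alternatively, the second identity can be bootstrapped from the first by inducting on the number of prime factors of $q(n)$ counted with multiplicity and multiplying by one prime at a time, but the direct Möbius computation seems a touch cleaner.
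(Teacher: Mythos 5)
Your proof is correct: the reindexing $d'=pd$ in the first part and the observation that $\mu(np/d')=0$ whenever $p\nmid d'$ (since $p^2\mid np$) are exactly right, as is the identification of the surviving divisors $d=q(n)e$, $e\mid\rad(n)$, in the second part. The paper states this proposition as a familiar background fact without proof, and your Möbius-product manipulation, resting only on Proposition~\ref{prop:mobiusphi}, is the standard and complete way to establish it.
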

\begin{prop}\label{prop:neg}
If $n$ is an odd positive integer, and $k\ge2$ is an integer, then
\[ \Phi_n(-x)=\Phi_{2n}(x),~~\Phi_{2n}(-x)=\Phi_n(x),~\hbox{ and }~\Phi_{2^kn}(-x)=\Phi_{2^kn}(x).\]
\end{prop}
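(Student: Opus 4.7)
The plan is to establish each of the three identities in Proposition~\ref{prop:neg} directly from the product definition of $\Phi_n$ together with Proposition~\ref{prop:Phinp}.

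I will start with the third identity, which is the easiest. For $m = 2^k n$ with $n$ odd and $k \geq 2$, I would compute $\rad(m) = 2\rad(n)$ and $q(m) = 2^{k-1}q(n)$, then apply Proposition~\ref{prop:Phinp} to write $\Phi_m(x) = \Phi_{2\rad(n)}(x^{q(m)})$. Since $k \geq 2$ forces $q(m)$ to be even, substituting $-x$ for $x$ leaves the right-hand side unchanged, yielding $\Phi_m(-x) = \Phi_m(x)$.

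For the first identity $\Phi_n(-x) = \Phi_{2n}(x)$, my key observation is that when $n$ is odd the map $\zeta \mapsto -\zeta$ is a bijection between the primitive $n$-th roots of unity and the primitive $2n$-th roots of unity, because $\gcd(2,n)=1$ implies that $-\zeta$ has order $\lcm(2,\ord(\zeta))=2n$. Expanding $\Phi_{2n}(x)$ along this bijection gives
\[
\Phi_{2n}(x) = \prod_\zeta (x+\zeta) = (-1)^{\phi(n)}\prod_\zeta(-x-\zeta) = (-1)^{\phi(n)}\Phi_n(-x),
\]
where $\zeta$ ranges over the primitive $n$-th roots of unity. Since $\phi(n)$ is even whenever $n \geq 3$, the sign disappears and the first identity follows. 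The second identity $\Phi_{2n}(-x) = \Phi_n(x)$ then drops out by replacing $x$ with $-x$ in the first.

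No step presents a serious obstacle; everything is a routine consequence of the definitions. The only subtlety worth flagging is that the first two identities tacitly require $n \geq 3$: at $n = 1$ the sign $(-1)^{\phi(1)} = -1$ does not vanish, and indeed $\Phi_1(-x) = -x-1 \neq x+1 = \Phi_2(x)$, so the proposition should be read as excluding the trivial case $n=1$ (which is harmless, since subsequent uses only need odd $n>1$).
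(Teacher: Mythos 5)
Your proof is correct. The paper states Proposition~\ref{prop:neg} as a standard background fact without proof, so there is nothing to compare against; your route---the bijection $\zeta\mapsto-\zeta$ between primitive $n$-th and primitive $2n$-th roots of unity (using that $\phi(n)$ is even for $n\ge3$) for the first two identities, and Proposition~\ref{prop:Phinp} with the evenness of $q(2^kn)$ for the third---is a standard and complete verification. Your flag about $n=1$ is also a correct observation: as literally stated the first two identities fail there (e.g.\ $\Phi_1(-x)=-x-1\ne x+1=\Phi_2(x)$), and the paper's later uses indeed only need $n>1$ or are unaffected.
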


\begin{prop} \label{prop:2coeffs}  If $\omega(n)\le 2$, then all the coefficients of $\Phi_n(x)$ lie in $\{-1,0,1\}$. \end{prop}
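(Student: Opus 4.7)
The plan is to split by the value of $\omega(n)$ and reduce the essential content to the case $n = pq$, a product of two distinct primes, which I would then handle by a direct formal power series computation.

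The cases $\omega(n) = 0$ and $\omega(n) = 1$ are immediate: for $n = 1$ one has $\Phi_1(x) = x - 1$, and for $n = p^k$, Proposition \ref{prop:Phinp} gives $\Phi_{p^k}(x) = \Phi_p(x^{p^{k-1}}) = 1 + x^{p^{k-1}} + \cdots + x^{(p-1)p^{k-1}}$. For $\omega(n) = 2$, write $n = p^a q^b$ with distinct primes $p, q$; by Proposition \ref{prop:Phinp}, $\Phi_n(x) = \Phi_{pq}(x^{p^{a-1}q^{b-1}})$, so every nonzero coefficient of $\Phi_n$ is a nonzero coefficient of $\Phi_{pq}$. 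Thus it suffices to prove the proposition for $n = pq$.

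For this remaining case I would use Proposition \ref{prop:mobiusphi} to write
\[
\Phi_{pq}(x) \;=\; \frac{(1-x)(1-x^{pq})}{(1-x^p)(1-x^q)} \;=\; (1-x)(1-x^{pq})\sum_{j,k \ge 0} x^{jp+kq}
\]
as a formal power series. Since $\deg \Phi_{pq} = (p-1)(q-1) < pq$, the factor $(1-x^{pq})$ does not affect coefficients of degree at most $(p-1)(q-1)$, so for every such $n$,
\[
[x^n]\,\Phi_{pq}(x) \;=\; N(n) - N(n-1), \qquad N(n) := \#\{(j,k)\in\ZZ_{\ge 0}^2 : jp + kq = n\},
\]
with the convention $N(-1)=0$.

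The crux, which I expect to be the main (albeit small) obstacle, is the bound $N(n) \le 1$ for all $n < pq$. Since $\gcd(p,q)=1$, any two integer solutions of $jp + kq = n$ differ by the shift $(q,-p)$; if two distinct nonnegative solutions existed, the one with larger second coordinate would satisfy $k \ge p$, forcing $n \ge pq$. Hence $N(n) \in \{0,1\}$ for every $n \le (p-1)(q-1)$, giving $[x^n]\Phi_{pq}(x) \in \{-1,0,1\}$ and completing the proof.
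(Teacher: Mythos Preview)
Your argument is correct and is essentially the classical proof (going back to Migotti, 1883): reduce to the squarefree case via $\Phi_n(x)=\Phi_{\rad(n)}(x^{q(n)})$, and for $n=pq$ expand $(1-x)/((1-x^p)(1-x^q))$ and use uniqueness of nonnegative representations $jp+kq$ below $pq$. Note, however, that the paper does not actually give a proof of this proposition; it is listed in Section~2 among several standard facts about cyclotomic polynomials that are stated without proof, so there is no ``paper's own proof'' to compare against.
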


\section{Rational coincidences}

\begin{thm} \label{thm:valueat2} Suppose $m$ and $n$ are distinct positive integers. Then $\Phi_m(r)\neq\Phi_n(r)$ for rational numbers $r\notin\{-1,0,1\}$ unless $r=2$ and $\{m,n\}=\{2,6\}$. \end{thm}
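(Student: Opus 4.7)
The plan is to reduce the statement to Bang--Zsygmondy's theorem on primitive prime divisors. Write $r = a/b$ with $\gcd(a, b) = 1$ and $b \geq 1$. I would first reduce to the case $r > 1$ using the results of Section~2: Proposition~\ref{prop:recip} interchanges $r$ and $1/r$ and preserves coincidences whenever $\phi(m) = \phi(n)$, which will be forced in the non-integer setting below; Proposition~\ref{prop:neg} relates coincidences at $-r$ to coincidences at $r$ for $m, n \geq 3$ via an involution on indices. The remaining cases in which $\min(m, n) \in \{1, 2\}$ are cleared by sign considerations, since $\Phi_1(r), \Phi_2(r) < 0$ while $\Phi_n(r) > 0$ for $n \geq 3$ and $r < -1$, and $\Phi_1(x) - \Phi_2(x) \equiv -2$.

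Now suppose $r = a/b > 1$ with $\gcd(a, b) = 1$. Introduce the homogenization
\[
F_n(a, b) := b^{\phi(n)} \Phi_n(a/b) = \prod_{d \mid n}(a^d - b^d)^{\mu(n/d)},
\]
which comes from Proposition~\ref{prop:mobiusphi} together with the identity $\sum_{d \mid n} d\,\mu(n/d) = \phi(n)$. Reduction modulo $b$ gives $F_n(a, b) \equiv a^{\phi(n)} \pmod{b}$, so $F_n(a, b)$ is coprime to $b$. For $b \geq 2$, comparing $b$-adic valuations in $\Phi_m(a/b) = \Phi_n(a/b)$ therefore forces $\phi(m) = \phi(n)$, and hence $F_m(a, b) = F_n(a, b)$; for $b = 1$ the equality $F_m(a, 1) = F_n(a, 1)$ is just the given $\Phi_m(a) = \Phi_n(a)$.

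Assume WLOG $m < n$. Bang--Zsygmondy produces a primitive prime divisor $p$ of $a^n - b^n$ (satisfying $\ord_p(ab^{-1}) = n$, so in particular $p \nmid n$) except when $(a, b, n) = (2, 1, 6)$, or $n = 1$ with $a - b = 1$, or $n = 2$ with $a + b$ a power of $2$. Such a $p$ divides $F_n(a, b) = F_m(a, b)$, and the identity $\prod_{d \mid m} F_d(a, b) = a^m - b^m$ gives $p \mid a^m - b^m$, contradicting $\ord_p(ab^{-1}) = n > m$. So $(a, b, n)$ must lie in one of the three exceptional families.

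Finally, I would dispatch the three exceptions by hand. The $n = 1$ exception is vacuous since $m < n = 1$ is impossible. The $n = 2$ exception requires $m = 1$ with $\Phi_1(r) = \Phi_2(r)$, but $\Phi_1(a/b) = (a-b)/b \neq (a+b)/b = \Phi_2(a/b)$ for any $b \geq 1$. The case $(a, b, n) = (2, 1, 6)$ gives $r = 2$ and $\Phi_6(2) = 3$; computing $\Phi_m(2) = 1, 3, 7, 5, 31$ for $m = 1, \dots, 5$ shows that $\Phi_2(2) = 3$ is the unique match, yielding the stated exception. The main obstacle will be the initial sign and reciprocation reduction, since Proposition~\ref{prop:neg} has minor irregularities at $n \in \{1, 2\}$ that require separate treatment; once we are reduced to $r > 1$ with $\gcd(a,b) = 1$, the argument is a clean application of Zsygmondy.
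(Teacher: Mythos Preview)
Your approach is essentially the paper's: both arguments reduce to Bang--Zsigmondy via the homogenization $\Phi_n(a,b)=b^{\phi(n)}\Phi_n(a/b)$ and the factorization $a^n-b^n=\prod_{d\mid n}\Phi_d(a,b)$. The paper organizes the cases as (i) integers $a\ge2$ via Bang, (ii) integers $a\le-2$ via Proposition~\ref{prop:neg}, and (iii) non-integer $r$ of \emph{any} sign directly via Zsigmondy, whereas you funnel everything through a preliminary reduction to $r>1$. Your observation that $F_n(a,b)\equiv a^{\phi(n)}\pmod b$ forces $\phi(m)=\phi(n)$ whenever $b\ge2$ is a pleasant extra the paper does not isolate; it lets you write $F_m(a,b)=F_n(a,b)$ on the nose rather than carrying the factor $b^{|\phi(n)-\phi(m)|}$ as the paper does.

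One small point to tighten in your reduction: the sign argument you give for $\min(m,n)\in\{1,2\}$ is stated only for $r<-1$, but the range $-1<r<0$ with $m=2$, $n\ge3$ also needs attention, since there $\Phi_2(r)=r+1>0$ and Proposition~\ref{prop:neg} does not send $n=2$ to a valid index. Your own $\phi(m)=\phi(n)$ observation disposes of this instantly (it forces $\phi(n)=1$), so the fix is already in your hands; just make sure to invoke it before, not after, the reduction.
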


\begin{proof} For integers $a\ge2$ with $r=a$, the result follows from Bang's Theorem~\cite{Bang86}, which says that if $a,n>1$ are integers and $(a,n)\neq (2,6),(2^j-1,2)$ for some integer $j\ge2$, then there is a prime $p$ such that $p\mid(a^n-1)$ but $p\nmid(a^k-1)$ for any $k<n$. Now, suppose $n>m$ with $n\neq 6$, and let $p$ be a prime dividing $a^n-1$ but not $a^k-1$ for any $k<n$. Then by Proposition~\ref{prop:mobiusphi}, $p\mid\Phi_n(a)$ but $p\nmid\Phi_m(a)$. Thus $\Phi_m(a)\neq\Phi_n(a)$. When $a=2$ and $n=6$, we can just check the values of $\Phi_m(2)$: we have $\Phi_m(2)=1,3,7,5,31$ for $m=1,2,3,4,5$, respectively, while $\Phi_6(2)=3$.  Finally, in the
case of $m=1,n=2$, we see that $\Phi_m(x)-\Phi_n(x)$ has no roots at all.  For integers $a\le-2$, the result
follows from Proposition \ref{prop:neg} by considering the separate cases where $m$ is odd, 2 (mod 4), or
divisible by 4, and the same for $n$.

When $r=a/b\notin\ZZ$, where $a,b$ are coprime integers, we use the generalization of
Bang's theorem due to Zsigmondy~\cite{Zsigmondy92}.  This asserts that $a^n-b^n$ has a prime divisor that
does not divide any $a^k-b^k$ for $1\le k\le n-1$ but for the Bang exceptions.  Let
\[
\Phi_n(x,y)=y^{\phi(n)}\Phi_n(x/y),
\]
so that $\Phi_n(x,y)$ is a homogeneous polynomial with integer coefficients, and as in Proposition
\ref{prop:mobiusphi}, we have
\[
\Phi_n(x,y)=\prod_{d\,\mid\,n}(x^d-y^d)^{\mu(n/d)}.
\]
If $\Phi_m(a/b)=\Phi_n(a/b)$ with $\phi(m)\le\phi(n)$, then $\Phi_m(a,b)=b^{\phi(n)-\phi(m)}\Phi_n(a,b)$,
yet the side of this equation corresponding to the larger of $m,n$ has a prime factor that does not divide the other side.  This completes the
proof.
\end{proof}

\section{An inequality}

The following result will be useful.
\begin{lemma} \label{prop:ineq}
Let $x$ be a real number with $x\ge2$ and let $k$ be a positive integer.  Then
$$
|\log(1-x^{-k})|>\sum_{j>k}|\log(1-x^{-j})|.
$$
\end{lemma}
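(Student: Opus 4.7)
The plan is to reduce the claim to a termwise comparison of two power series. Setting $u = 1/x \in (0, 1/2]$, every quantity $1 - u^j$ lies in $(0,1)$, so each logarithm that appears is negative, and the inequality to be established is equivalent to
\[
-\log(1-u^k) > \sum_{j>k} \bigl(-\log(1-u^j)\bigr).
\]

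First I would expand each side using the Taylor series $-\log(1-t) = \sum_{i \geq 1} t^i/i$, valid on $(0,1)$. The left-hand side becomes $\sum_{i \geq 1} u^{ki}/i$. On the right-hand side, after interchanging the order of summation (justified because every term is positive) and summing the inner geometric series in $j$, the double sum rearranges to $\sum_{i \geq 1} u^{(k+1)i} / \bigl(i(1-u^i)\bigr)$.

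Next I would compare the $i$-th summand on each side. The ratio of the $i$-th right-hand summand to the $i$-th left-hand summand is exactly $u^i/(1-u^i)$, which is at most $1$ whenever $u \leq 1/2$, and is at most $1/3$ as soon as $i \geq 2$. Hence the left-hand series dominates the right-hand series termwise, with a strict gap at every index $i \geq 2$.

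The one delicate point, and what I expect to be the only real obstacle, is the boundary case $x = 2$: there the $i=1$ comparison gives ratio exactly $1$, so termwise domination alone is not strict at that index. This is handled automatically by the observation in the previous step: the $i=2$ comparison is strict regardless (ratio at most $1/3$), and since both series converge absolutely with nonnegative terms, summing preserves the strict inequality. The only thing left to check carefully is that the double-sum rearrangement is legitimate and that nothing blows up at $u = 1/2$, which is immediate because $u^i/(1-u^i)$ stays bounded by $1$ throughout.
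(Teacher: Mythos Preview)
Your argument is correct and is essentially identical to the paper's proof: both expand via the Taylor series $-\log(1-t)=\sum_{i\ge1}t^i/i$, sum the inner geometric series over $j>k$, and then compare termwise using $x\ge2$ (equivalently $u\le\tfrac12$). You are in fact slightly more careful than the paper at the boundary $x=2$, explicitly noting that the $i=1$ terms coincide while the $i\ge2$ terms give the strict inequality.
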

\begin{proof}
The left side of the inequality is
$$
\sum_{i\ge1}\frac1ix^{-ik},
$$
while the right side is
$$
\sum_{i\ge1}\frac1i\sum_{j>k}x^{-ij}=\sum_{i\ge1}\frac1i\cdot\frac{x^{-ik}}{x^i-1}
<\sum_{i\ge1}\frac1ix^{-ik},
$$
using $x\ge2$.  This completes the proof.
\end{proof}

Note that the following result when $x$ is integral is due to Hering \cite[Theorem 3.6]{Her74}.
\begin{thm}
\label{thm:ineq}
Let $x$ be a real number with $x\ge2$ and let $n$ be a positive integer.  Then
if $\mu(\rad(n))=1$, we have 
\[
\frac{x^{q(n)}-1}{x^{q(n)}}x^{\phi(n)}\le\Phi_n(x)<x^{\phi(n)}
\]
with
equality only in the case $n=1$, while if
$\mu(\rad(n))=-1$, we have 
\[x^{\phi(n)}<\Phi_n(x)<\frac{x^{q(n)}}{x^{q(n)}-1}x^{\phi(n)}.\]
\end{thm}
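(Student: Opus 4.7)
The plan is first to reduce to squarefree moduli via Proposition \ref{prop:Phinp}. Setting $m=\rad(n)$ and $y=x^{q(n)}$, we have $\Phi_n(x)=\Phi_m(y)$, $\phi(n)=q(n)\phi(m)$, $y\ge x\ge 2$, and $\mu(\rad(n))=\mu(m)$. So it suffices to prove: for squarefree $m$ and real $y\ge 2$, if $\mu(m)=1$ then $(1-y^{-1})\,y^{\phi(m)}\le\Phi_m(y)<y^{\phi(m)}$ with equality iff $m=1$, while if $\mu(m)=-1$ then $y^{\phi(m)}<\Phi_m(y)<(1-y^{-1})^{-1}y^{\phi(m)}$. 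The case $m=1$ is immediate from $\Phi_1(y)=y-1$, so I henceforth assume $m>1$.

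From Proposition \ref{prop:mobiusphi} together with the identity $\sum_{d\mid m}d\,\mu(m/d)=\phi(m)$, one obtains $\Phi_m(y)/y^{\phi(m)}=\prod_{d\mid m}(1-y^{-d})^{\mu(m/d)}$. Setting $L_d:=\log(1-y^{-d})<0$ and taking logarithms,
\[
T:=\log\!\bigl(\Phi_m(y)/y^{\phi(m)}\bigr)=\sum_{d\mid m}\mu(m/d)\,L_d.
\]
Let $p$ denote the smallest prime divisor of $m$; since $m$ is squarefree, $\mu(m/p)=-\mu(m)$. Isolating the $d=1$ and $d=p$ terms gives
\[
T=\mu(m)(L_1-L_p)+E,\qquad E:=\sum_{\substack{d\mid m\\ d>p}}\mu(m/d)\,L_d.
\]

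The key estimate is Lemma \ref{prop:ineq} applied at $k=p$: the triangle inequality yields $|E|\le\sum_{d\mid m,\,d>p}|L_d|\le\sum_{j>p}|L_j|<|L_p|$. Hence $T$ lies strictly between $\mu(m)(L_1-L_p)\pm|L_p|$. For $\mu(m)=1$ this is $T\in(L_1,\,L_1-2L_p)$, and for $\mu(m)=-1$ it is $T\in(-L_1+2L_p,\,-L_1)$. Both desired strict conclusions $T\in(L_1,0)$ and $T\in(0,-L_1)$ therefore collapse to a single elementary inequality, $|L_1|>2|L_p|$ for $y\ge 2$ and $p\ge 2$. Rewriting this as $(1-y^{-p})^2>1-y^{-1}$ and clearing denominators makes it $y^{2p-1}+1>2y^p$; since $y^{p-1}\ge 2$ gives $y^{2p-1}\ge 2y^p$, the strict inequality follows from the $+1$.

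The main obstacle I anticipate is identifying this precise decomposition of $T$. A naive splitting that isolates only the $d=1$ term gives a remainder bounded by $|L_1|$, which is a factor of roughly $2$ too weak for either bound; one must also pull off the $d=p$ term and exploit the cancellation in $L_1-L_p$. Once that is done, Lemma \ref{prop:ineq} together with the auxiliary inequality above finishes matters, and tracking the substitution $y=x^{q(n)}$ back through gives the theorem.
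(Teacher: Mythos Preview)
Your proof is correct, but it proceeds along a genuinely different route from the paper's.

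Both arguments begin identically: reduce to the squarefree case via Proposition~\ref{prop:Phinp}, write $\Phi_m(y)/y^{\phi(m)}=\prod_{d\mid m}(1-y^{-d})^{\mu(m/d)}$, and take logarithms. From there they diverge. The paper isolates only the $d=1$ term and applies Lemma~\ref{prop:ineq} at $k=1$; this yields the two ``inner'' inequalities ($f_m<1$ when $\mu(m)=1$, $f_m>1$ when $\mu(m)=-1$) but, as you correctly anticipate, is a factor of two too weak for the ``outer'' ones. The paper then obtains the outer bounds by a separate induction on $\omega(m)$, proving the multiplicative relation $f_m(y)f_{mp}(y)=\prod_{d\mid m}(1-y^{-pd})^{\mu(m/d)}$ and bounding it via a second application of Lemma~\ref{prop:ineq}. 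Your approach instead pulls off \emph{two} terms, $d=1$ and $d=p$ (with $p$ the least prime factor of $m$), applies Lemma~\ref{prop:ineq} once at $k=p$ to control the remainder $E$, and replaces the induction by the single elementary estimate $(1-y^{-p})^2>1-y^{-1}$, equivalently $y^{2p-1}+1>2y^p$. This delivers all four inequalities simultaneously. Your argument is shorter and avoids the inductive machinery; the paper's induction, on the other hand, yields the auxiliary identity for $f_mf_{mp}$ as a by-product, though it is not used elsewhere.
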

\begin{proof}
Let $f_n(x)=\Phi_n(x)/x^{\phi(n)}$.
When $n>1$, Propositions \ref{prop:mobiusphi} and \ref{prop:recip} imply that
\begin{equation}\label{eq:fn}
f_n(x)=\Phi_n(x^{-1})=\prod_{d\mid n} (1-x^{-d})^{\mu(\frac{n}{d})}.\end{equation}
This formula continues to hold when $n=1$.

First assume that $n$ is squarefree.  Taking the logarithm of \eqref{eq:fn} we have
\[
\log f_n(x)=\mu(n)\log(1-x^{-1})+\sum_{\substack{d\mid n\\d>1}}\mu(n/d)\log(1-x^{-d}),
\]
so that
\begin{equation}\label{eq:ineq}
\mu(n)\log(1-x^{-1})+\sum_{j>1}\log(1-x^{-j})<\log f_n(x)<\mu(n)\log(1-x^{-1})-\sum_{j>1}\log(1-x^{-j}).
\end{equation}
Thus, by Lemma \ref{prop:ineq}, we have \[(1+\mu(n))\log(1-x^{-1})<\log f_n(x)<(-1+\mu(n))\log(1-x^{-1}).\] Since $\log(1-x^{-1})<0$, we have $f_n(x)>1$ when $\mu(n)=-1$ and $f_n(x)<1$ when
$\mu(n)=1$.  This proves two of the four inequalities of the theorem in the squarefree case.

Still assuming that $n$ is squarefree,  if $p$ is a prime not dividing $n$, then we have \[f_n(x)f_{np}(x)=\prod_{d\mid n} (1-x^{-d})^{\mu(\frac{n}{d})}(1-x^{-d})^{\mu(\frac{pn}{d})}(1-x^{-pd})^{\mu(\frac{pn}{pd})}=\prod_{d\mid n}(1-x^{-pd})^{\mu(\frac{n}{d})}.\] 
We claim first that $f(n)f(np)<1$ if $\mu(n)=1$ and $f(n)f(np)>1$ if $\mu(n)=-1$. To see this, take logarithms, and this is equivalent to saying that 
\[\sum_{d\mid n} \mu\left(\frac{n}{d}\right)\log(1-x^{-pd})<0\] 
if $\mu(n)=1$ and 
\[\sum_{d\mid n}\mu\left(\frac{n}{d}\right)\log(1-x^{-pd})>0\] 
if $\mu(n)=-1$. Let us consider the case where $\mu(n)=1$; the other case is similar. We have 
\begin{align*} \sum_{d\mid n} \mu\left(\frac{n}{d}\right)\log(1-x^{-pd}) &\le \log(1-x^{-p})-\sum_{\substack{d\mid n \\ d>1}} \log(1-x^{-pd})\\
&<\log(1-x^{-p})-\sum_{j>p}\log(1-x^{-j})<0,
\end{align*}
by Lemma \ref{prop:ineq}.

We now complete the proof of the theorem for squarefree numbers by induction on $n$.
 The base case is $n=1$, where we have $f_1(x)=\Phi_1(x)/x=(x-1)/x$, so the theorem
 holds here.  Now, suppose that the result is true for $n$.  We prove it for $np$, where $p$ is a prime
 not dividing $n$. If $\mu(n)=1$, then $\mu(np)=-1$.  To get the upper bound, we have $(x-1)/x\le f_n(x)<1$ and $f_n(x)f_{np}(x)<1$, so \[f_{np}(x)<\frac{1}{f_n(x)}\le
\frac{x}{x-1},\] as desired. The case where $\mu(n)=-1$ is similar.

Finally, we must handle the case where $n$ is not squarefree. 
Using Proposition \ref{prop:Phinp} and noting that
$\phi(n)=q(n)\phi(\rad(n))$, we apply the squarefree case to $\Phi_{\rad(n)}(x^{q(n)})$.
\end{proof}

\begin{cor}
\label{cor:ineq}
Under the same assumptions as in Theorem \ref{thm:ineq}, we have
\[\frac12x^{\phi(n)}\le \Phi_n(x)<x^{\phi(n)}\]
when $\mu(\rad(n))=1$, with equality only in the case $n=1$ and $x=2$.  Else, if $\mu(\rad(n))=-1$,
\[x^{\phi(n)}<\Phi_n(x)<2x^{\phi(n)}.\]
\end{cor}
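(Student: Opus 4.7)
The plan is to deduce the corollary directly from Theorem \ref{thm:ineq} by sandwiching the factor $\frac{x^{q(n)}-1}{x^{q(n)}}$ (resp.\ $\frac{x^{q(n)}}{x^{q(n)}-1}$) between $\frac12$ (resp.\ $2$) and the tighter bound already established. The key observation is that $x^{q(n)}\ge 2$ whenever $x\ge 2$ and $q(n)\ge 1$, which is all we will need.

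Concretely, I would first note the elementary fact that for any real $y\ge 2$, one has $\frac12\le \frac{y-1}{y}<1$ and $1<\frac{y}{y-1}\le 2$, with equality on the outer sides iff $y=2$. Applying this with $y=x^{q(n)}$ (which is at least $2$ since $x\ge 2$ and $q(n)\ge 1$) turns the bounds of Theorem \ref{thm:ineq} into the bounds claimed by the corollary: in the case $\mu(\rad(n))=1$,
\[
\tfrac12 x^{\phi(n)}\le \frac{x^{q(n)}-1}{x^{q(n)}}x^{\phi(n)}\le \Phi_n(x)<x^{\phi(n)},
\]
and in the case $\mu(\rad(n))=-1$,
\[
x^{\phi(n)}<\Phi_n(x)<\frac{x^{q(n)}}{x^{q(n)}-1}x^{\phi(n)}\le 2x^{\phi(n)}.
\]

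The only subtle point is tracking when equality can occur in the lower bound $\tfrac12 x^{\phi(n)}\le \Phi_n(x)$. Equality in the outer inequality forces two things simultaneously: equality in Theorem \ref{thm:ineq} (which only happens at $n=1$) and $x^{q(n)}=2$ (which forces $x=2$, given $q(n)\ge 1$). When $n=1$ we have $\Phi_1(x)=x-1$, $\phi(1)=q(1)=1$, so both conditions are compatible precisely at $x=2$, giving $\Phi_1(2)=1=\tfrac12\cdot 2^{\phi(1)}$. For $n>1$ the inequality from Theorem \ref{thm:ineq} is strict, hence so is our lower bound. The upper bounds are strict throughout, directly from Theorem \ref{thm:ineq}.

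There is no real obstacle here; the argument is a two-line weakening of the already-proved Theorem \ref{thm:ineq}. The only thing to be careful about is not to accidentally claim equality at $x=2$ in the $\mu(\rad(n))=-1$ case (where the lower bound is already strict) or in the $\mu(\rad(n))=1$ case for $n>1$.
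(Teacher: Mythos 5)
Your proposal is correct and is exactly the deduction the paper intends: the corollary is stated as an immediate consequence of Theorem \ref{thm:ineq}, obtained by bounding $\frac{x^{q(n)}-1}{x^{q(n)}}\ge\frac12$ and $\frac{x^{q(n)}}{x^{q(n)}-1}\le2$ using $x^{q(n)}\ge2$. Your bookkeeping of the equality case (forcing both $n=1$ and $x^{q(n)}=2$, hence $x=2$) is also right, so there is nothing to add.
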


We now give a proof of a similar result that holds as well for complex numbers.

\begin{prop}\label{prop:complex}
For $z\in\CC$ with $|z|\ge2$, 
\[\frac12|z|^{\phi(n)}\le|\Phi_n(z)|<2|z|^{\phi(n)},\]
with equality only in the cases $n=1$, $z=2$ and $n=2$, $z=-2$.  
\end{prop}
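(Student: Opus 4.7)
The plan is to reduce the estimate to the circle $|z|=2$ via the maximum-modulus principle, reduce further to squarefree $n$, and then bound $|\Phi_n(w)|$ for $|w|=1/2$ using the product formula combined with Lemma~\ref{prop:ineq}.

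Since every zero of $\Phi_n$ has modulus $1$, the function $f_n(z):=\Phi_n(z)/z^{\phi(n)}$ is holomorphic and nonvanishing on $\{|z|>1\}$, with $f_n(z)\to 1$ as $z\to\infty$. Applying the maximum-modulus principle to $f_n$ and to $1/f_n$ on annuli $\{2\le|z|\le R\}$ and letting $R\to\infty$, both $\max_{|z|\ge 2}|f_n|$ and $\min_{|z|\ge 2}|f_n|$ are attained on the circle $\{|z|=2\}$. Proposition~\ref{prop:Phinp} gives $f_n(z)=f_{\rad(n)}(z^{q(n)})$ and $|z^{q(n)}|\ge 2$ whenever $|z|\ge 2$, so it suffices to treat squarefree $n$.

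For squarefree $n>1$ and $w=1/z$ with $|w|=1/2$, the product formula $\Phi_n(w)=\prod_{d\mid n}(1-w^d)^{\mu(n/d)}$ yields
\[\log|\Phi_n(w)|\;=\;\mu(n)\log|1-w|\;+\;\sum_{d\mid n,\,d>1}\mu(n/d)\log|1-w^d|.\]
Using $|\log|1-w^d||\le -\log(1-|w|^d)$, the second sum is bounded in absolute value by $\sum_{d\ge 2}-\log(1-2^{-d})$, which by Lemma~\ref{prop:ineq} is strictly less than $\log 2$; and $|\log|1-w||\le -\log(1-1/2)=\log 2$ on $|w|=1/2$.

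The main obstacle is that the straightforward triangle-inequality bound from these estimates yields only $|\log|\Phi_n(w)||<2\log 2$, i.e.\ $|\Phi_n(w)|\in(1/4,4)$, a factor of $2$ short of the target $[1/2,2)$. Closing this gap requires the observation that the two pieces above cannot be simultaneously close to saturation: the first is tight only as $w\to 1/2$, while the second is uniformly bounded away from $\log 2$. My plan is to induct on $\omega(n)$ in parallel with the proof of Theorem~\ref{thm:ineq}. The base cases $n=1$ and $n=p$ prime are handled directly from the polynomial expansions: for $n=p$ one has $|\Phi_p(w)|\le\sum_{i=0}^{p-1}|w|^i=2(1-2^{-p})<2$ by the triangle inequality, and $|\Phi_p(w)|=|1-w^p|/|1-w|\ge(1-2^{-p})/(3/2)=(2/3)(1-2^{-p})\ge 1/2$, with equality on the left only when $p=2$ and $w=-1/2$ (that is, $z=-2$). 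For the inductive step, the identity $f_n(z)f_{np}(z)=\prod_{d\mid n}(1-z^{-pd})^{\mu(n/d)}$ combined with Lemma~\ref{prop:ineq} applied at level $p$ controls the deviation of $|f_{np}|$ from $1/|f_n|$ tightly enough to propagate the bound. The equality cases $(n,z)\in\{(1,2),(2,-2)\}$ then arise only from simultaneous saturation of every inequality in the chain, which via Corollary~\ref{cor:ineq} and Proposition~\ref{prop:neg} reduces to the real points $z=\pm 2$.
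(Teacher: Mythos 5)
Your reductions (maximum modulus to the circle $|z|=2$, and $f_n(z)=f_{\rad(n)}(z^{q(n)})$ to squarefree $n$) are valid, your diagnosis of the difficulty is exactly right (termwise triangle-inequality bounds only give $|\Phi_n(z)|\in(\frac14|z|^{\phi(n)},4|z|^{\phi(n)})$), and your base cases $n=1$, $n=p$ are correct. But the step you rely on to close the factor-of-two gap --- the induction on $\omega(n)$ ``in parallel with the proof of Theorem~\ref{thm:ineq}'' --- is asserted rather than proved, and as stated it does not go through. The real-variable induction rests on two one-sided facts: $f_n(x)<1$ or $f_n(x)>1$ according to $\mu(n)$, and likewise $f_n(x)f_{np}(x)\lessgtr 1$, both consequences of every $\log(1-x^{-d})$ being a negative real. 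For complex $z$ these fail already at $n=1,2$ (e.g.\ $|f_1(-2)|=\frac32>1$ although $\mu(1)=1$, and $|f_2(-2)|=\frac12<1$ although $\mu(2)=-1$), so from the identity $|f_{np}(z)|=\frac1{|f_n(z)|}\prod_{d\mid n}|1-z^{-pd}|^{\mu(n/d)}$ all you can extract with Lemma~\ref{prop:ineq} is $|f_{np}(z)|\le \frac{1}{|f_n(z)|}\,C$ with some $C>1$ (the correction product can exceed $1$, since factors with $\mu(n/d)=1$ can have $|1-z^{-pd}|>1$). Combined with the inductive hypothesis $|f_n(z)|\ge\frac12$, this yields only $|f_{np}(z)|\le 2C>2$, so the bound does not propagate. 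To rescue the induction you would have to show that the two extremes cannot occur simultaneously --- that when $|f_n(z)|$ is near $\frac12$ the correction product is correspondingly below $1$ --- and you give no argument for that correlation.

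That correlation is precisely the content of the paper's proof, which takes a different route: it groups the $d=1$ factor with the $d=p$ factor for $p$ the least prime of $n$, using
\[
\frac{|1-z^{-1}|}{|1-z^{-p}|}=\frac{1}{|1+z^{-1}+\cdots+z^{-(p-1)}|},
\]
which is bounded below by $\tfrac12(1-2^{-p})^{-1}$ and, for $p\ge3$, above by $\tfrac{12}{7}$; the remaining factors are then handled crudely via Lemma~\ref{prop:ineq} with room to spare. The genuinely hard case is $p=2$, where the paper pairs the terms $d=2$ and $d=q$ ($q$ the least odd prime of $n$), writes $z=re^{i\theta}$, and shows by a calculus argument that $|1+z||1-z^q|$ is minimized at $z=-r$, followed by a delicate series estimate to beat the constant $2$. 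Your proposal contains no substitute for this analysis, and your equality-case discussion (reducing to $z=\pm2$ ``via Corollary~\ref{cor:ineq} and Proposition~\ref{prop:neg}'') inherits the same gap, since it presupposes the strict bounds you have not established. So the proof as proposed is incomplete at its central step.
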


\begin{proof}
Our starting point is \eqref{eq:fn}, which holds as well for complex numbers.  Also, as in the
proof of Theorem \ref{thm:ineq}, it suffices to handle the case when $n$ is squarefree.
The cases $n=1,2$ are true by inspection, so we take $n>2$.
Assume that $\mu(n)=1$; the case when $\mu(n)=-1$ will follow by the same argument.  Let $p$ be the least prime factor of $n$.  By \eqref{eq:fn} we have
\begin{equation}
    \label{eq:ident}
\frac{|\Phi_n(z)|}{|z|^{\phi(n)}}=\frac{|1-z^{-1}|}{|1-z^{-p}|}\prod_{\substack{d|n\\d>p}}|1-z^{-d}|^{\mu(d)},
\end{equation}
using $\mu(n/d)=\mu(d)$ when $n$ is squarefree and $\mu(n)=1$.  
By the triangle inequality, when $|z|\ge2$,
\begin{align}
\frac{|1-z^{-1}|}{|1-z^{-p}|}&=\frac1{|1+z^{-1}+\dots+z^{-(p-1)}|}\ge\frac1{1+|z|^{-1}+\dots+|z|^{-(p-1)}}\notag\\
&\ge\frac1{2-2^{-(p-1)}}
=\frac12(1-2^{-p})^{-1}.
\label{eq:lb}
\end{align}
We now find a lower bound for the remaining product in \eqref{eq:ident}.  For $|z|\ge2$, we have
\[
|(1-z^{-d})^{\mu(d)}|\ge1-2^{-d},
\]
so that
\[
\log\prod_{\substack{d|n\\d>p}}|(1-z^{-d})^{\mu(d)}|\ge\sum_{d>p}\log(1-2^{-d})
>\log(1-2^{-p}),
\]
by Lemma \ref{prop:ineq}.  Hence with \eqref{eq:ident} and \eqref{eq:lb}, the lower bound in the proposition holds.

For the upper bound, first assume that $p>2$.
Referring to \eqref{eq:ident}, note that
\begin{equation}\label{eq:ub}
\frac{|1-z^{-1}|}{|1-z^{-p}|}
=\frac{|z^{p-1}(z-1)|}{|z^p-1|}\le\frac{|z|^{p-1}(|z|+1)}{|z|^p-1}
=1+\frac{|z|^{p-1}+1}{|z|^p-1}\le\frac{12}7,
\end{equation}
when $|z|\ge2$ and $p\ge3$.  
Note that $|(1-z^{-d})^{\mu(d)}|\le(1-|z|^{-d})^{-1}$ for $|z|>1$.  So, for $|z|\ge2$ and referring to 
\eqref{eq:ident},
\[
\prod_{\substack{d>p\\d|n}}|(1-z^{-d})^{\mu(d)}|\le\prod_{d>p}(1-2^{-d})^{-1}
\le\prod_{d\ge4}(1-2^{-d})^{-1}< 1.14.
\]
With \eqref{eq:ub} this completes the upper bound proof when $p\ge3$.

Suppose $p=2$.  Since $n>2$ and $n$ is squarefree,  we may assume that $n$ has an odd prime factor, let
$q$ be the least one.
Again from \eqref{eq:ident} we have
\begin{equation}
    \label{eq:ident2}
\frac{|\Phi_n(z)|}{|z|^{\phi(n)}}=\frac1{|1+z^{-1}||1-z^{-q}|}\prod_{\substack{d|n\\d>q}}|1-z^{-d}|^{\mu(d)}
=\frac{|z|^{q+1}}{|1+z||1-z^{q} |}\prod_{\substack{d|n\\d>q}}|1-z^{-d}|^{\mu(d)}.
\end{equation}
Writing $z=re^{i\theta}$, we have
\begin{align*}
(|1+z|&|1-z^{q}|)^2=(r^2+1+2\Re(z))(r^{2q}+1-2\Re(z^q))\\
&=(r^2+1)(r^{2q}+1)+2r(r^{2q}+1)\cos\theta-2r^q(r^2+1)\cos q\theta-4r^{q+1}\cos\theta\cos q\theta.
\end{align*}
Taking the derivative with respect to $\theta$ and setting it equal to 0 gives us either $\sin\theta=0$
or
\[
2r(r^{2q}+1)=2r^q(r^2+1)q\frac{\sin q\theta}{\sin\theta}+4r^{q+1}\cos q\theta+4r^{q+1}q\cos\theta\frac{\sin q\theta}{\sin\theta}.
\]
If $\sin\theta\ne0$,
using $|\sin q\theta/\sin\theta|< q$ and $r\ge2$, we see that for $q\ge11$ this last equation has no solutions.
So, our expression reaches a minimum at $\theta=0$ or $\theta=\pi$, that is, $z=r$ or $z=-r$.  We see
that $z=-r$ gives the minimum for $|1+z||1-z^q|$.  For $q=3,5,7$ we check directly that the minimum for
$|1+z||1-z^q|$ also occurs at $z=-r$.
Since the logarithmic derivative of $1/((1-r^{-1})(1+r^{-q}))$ as a function of $r$ is negative, this implies that 
\begin{equation}\label{eq:ub2}
\frac{|z|^{q+1}}{|1+z||1-z^{q} |}
\le \frac{r^{q+1}}{(r-1)(r^q+1)}\le
\frac{2^{q+1}}{2^q+1}=2\left(1-\frac1{2^{q}+1}\right).
\end{equation}

Referring to \eqref{eq:ident2}, we thus have for $|z|\ge2$,
\begin{align*}
\log\prod_{\substack{d>q\\d|n}}|(1-z^{-d})^{\mu(d)}|&\le\log\prod_{\substack{d>q\\4\,\nmid\,q}}(1-2^{-d})^{-1}
=\sum_{j\ge1}\frac1j\left(\frac1{2^{qj}(2^j-1)}-\frac1{2^{4j\lfloor q/4\rfloor}(2^{4j}-1)}\right)\\
&<
\frac{13}{15\cdot2^q}+\sum_{j\ge2}\frac1{j2^{qj}(2^j-1)}.
\end{align*}
Since
\[
\log\left(1-\frac1{2^q+1}\right)=-\sum_{j\ge1}\frac1j\frac1{(2^q+1)^j},
\]
with the prior calculation, we see that
\[
\left(1-\frac1{2^q+1}\right)\prod_{\substack{d>q\\d|n}}|(1-z^{-d})^{\mu(d)}|<1.
\]
With \eqref{eq:ident2} and \eqref{eq:ub2}, this completes the proof when $p=2$. \end{proof}

\section{Real coincidences}

In this section we discuss solutions to $\Phi_m(x)=\Phi_n(x)$, where $x\in\RR$, beginning with the
case $x\in(0,1/2]$.

\begin{thm} \label{thm:gmnlemma} Let $m$ and $n$ be distinct positive integers, and let $x$ be a real number with $0<x\le\frac{1}{2}$.  Then $\Phi_m(x)\ne\Phi_n(x)$. \end{thm}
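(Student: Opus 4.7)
The plan is to dispose of the case $1 \in \{m,n\}$ by a sign argument, reduce the main case via reciprocity to a statement about $y = 1/x \ge 2$, and then exploit Corollary \ref{cor:ineq} to eliminate most pairs $(m,n)$ by parity of $\mu(\rad(\cdot))$.

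First, if $m=1$ (the case $n=1$ is symmetric), then $\Phi_1(x)=x-1<0$ on $(0,1/2]$, whereas for $n\ge 2$ the polynomial $\Phi_n$ has no positive real roots and satisfies $\Phi_n(0)=1>0$, so $\Phi_n>0$ on $(0,\infty)$. Hence $\Phi_m(x)\neq\Phi_n(x)$ in this case.

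Now assume $m,n\ge 2$ and set $y=1/x\ge 2$. By Proposition \ref{prop:recip},
\[
\Phi_k(x)=\Phi_k(1/y)=\Phi_k(y)/y^{\phi(k)}=f_k(y),
\]
with $f_k$ as in Theorem \ref{thm:ineq}, so $\Phi_m(x)=\Phi_n(x)$ is equivalent to $f_m(y)=f_n(y)$. Corollary \ref{cor:ineq} then gives
\[
f_k(y)\in(\tfrac12,1)\text{ if }\mu(\rad(k))=1,\qquad f_k(y)\in(1,2)\text{ if }\mu(\rad(k))=-1,
\]
and since these ranges are disjoint, $f_m(y)=f_n(y)$ forces $\mu(\rad(m))=\mu(\rad(n))$.

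The remaining task --- ruling out $f_m(y)=f_n(y)$ when $m\neq n$ share a common sign of $\mu(\rad(\cdot))$ --- is the main obstacle, and I expect it to be the technical heart of the proof. The plan is to take logarithms in \eqref{eq:fn} and write
\[
\log f_m(y)-\log f_n(y)=\sum_{d\ge 1}C(d)\log(1-y^{-d}),\qquad C(d):=\mu(m/d)\mathbf{1}_{d\mid m}-\mu(n/d)\mathbf{1}_{d\mid n},
\]
then locate the least $d_0$ with $C(d_0)\neq 0$ and invoke Lemma \ref{prop:ineq} to show that this leading contribution dominates the tail, forcing the sum to be nonzero for $y\ge 2$. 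The subtle point is that $|C(d)|$ can equal $2$ (when $d\mid\gcd(m,n)$ and the two Möbius values are opposite), whereas Lemma \ref{prop:ineq} provides only a factor-of-one margin; closing the gap will require either sharpening the inequality for $y\ge 2$, or exploiting the structural fact that magnitude-$2$ coefficients sit at common divisors of $m$ and $n$ and so, under the hypothesis $\mu(\rad(m))=\mu(\rad(n))$, cannot occur at the first index $d_0$ at which the divisor sets of $m$ and $n$ genuinely diverge.
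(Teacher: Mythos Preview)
Your outline is on the right track and shares its core idea with the paper's proof: expand $\log\Phi_m(x)-\log\Phi_n(x)$ via Proposition~\ref{prop:mobiusphi}, locate the dominant term, and control the tail with Lemma~\ref{prop:ineq}. Your preliminary reduction via Corollary~\ref{cor:ineq} to the case $\mu(\rad(m))=\mu(\rad(n))$ is clean and legitimate (that corollary is proved independently of the present theorem), and it does dispose of the case the paper calls ``$\mu(m)\ne\mu(n)$'' in one stroke. In the squarefree case your structural observation is exactly right: once $\mu(m)=\mu(n)$, common divisors $d\mid\gcd(m,n)$ give $\mu(m/d)=\mu(m)\mu(d)=\mu(n)\mu(d)=\mu(n/d)$, so $C(d)=0$ there, and all remaining coefficients have $|C(d)|\le 1$; Lemma~\ref{prop:ineq} then finishes. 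This is precisely the paper's argument for that case.

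The genuine gap is the non-squarefree case with $q(m)\ne q(n)$. Your claim that magnitude-$2$ coefficients cannot occur at the first index $d_0$ is in fact true (one checks $d_0=\min(q(m),q(n))$ and $|C(d_0)|=1$), but this is \emph{not enough}: magnitude-$2$ coefficients can and do occur \emph{after} $d_0$, and Lemma~\ref{prop:ineq} gives only $|\log(1-y^{-d_0})|>\sum_{j>d_0}|\log(1-y^{-j})|$, not twice that. A small example already shows the issue: take $m=8$, $n=4$, so $\mu(\rad(m))=\mu(\rad(n))=-1$, yet $C(2)=1$, $C(4)=-2$, $C(8)=1$. Here the naive bound fails and one must use the sparsity of the exponents to recover. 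The paper does not find a uniform trick for this; instead it carries out a case analysis on the pair $(q(m),q(n))$ --- the sub-cases $q(n)\ge 4$, $q(n)=3$, $q(n)=2$ with $m$ squarefree, and then $q(m),q(n)>1$ --- in several of which one must write down an explicit rational function in $x$ and verify by hand that it is below $1$ on $(0,\tfrac12]$. Your proposal stops short of this work, and neither of your two suggested fixes (sharpening Lemma~\ref{prop:ineq}, or the $d_0$ observation) closes it on its own.
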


\begin{proof} First, we handle the case where one of $m$ and $n$ is equal to 1, say $m=1$ and $n>1$. Then we have \[\Phi_n(x)=\prod_{d\mid n} (1-x^d)^{\mu(\frac{n}{d})}>0\] whereas $\Phi_1(x)=x-1<0$. Thus $\Phi_n(x)\neq\Phi_1(x)$.

Now assume that $m,n>1$.  Define $g(m,n)=g(m,n,x)$ by 
\[g(m,n)=g(m,n,x)=\log\frac{\Phi_m(x)}{\Phi_n(x)}.\] 
We have 
\begin{equation}\label{eq:gmn}
g(m,n)=\sum_{d\mid m} \mu\left(\frac{m}{d}\right)\log\left(1-x^d\right)-\sum_{e\mid n}\mu\left(\frac{n}{e}\right)\log\left(1-x^e\right).  \end{equation} 
Recall that for a positive integer $k$, we let $q(k)=\frac{k}{\rad(k)}$. We may assume that $q(n)\ge q(m)$.
We split the remainder of the proof up into the following different cases, depending on $m$ and $n$: \begin{itemize} \item $m$ and $n$ are squarefree, \item $m$ is squarefree and $q(n)\ge 4$, \item $m$ is squarefree and $q(n)=3$, \item $m$ is squarefree and $q(n)=2$, \item neither $m$ nor $n$ is squarefree. \end{itemize}

First, assume that $m$ and $n$ are squarefree. Suppose that $\mu(m)\neq\mu(n)$, i.e.\ $\mu(n)=-\mu(m)$. Then the coefficient of $\log(1-x)$ in~\eqref{eq:gmn} is $2\mu(m)$, and the coefficient of each other $\log(1-x^d)$ lies in $\{-2,-1,0,1,2\}$. Hence the sign of $g(m,n)$ is the same as that of $\mu(m)\log(1-x)$ by Lemma~\ref{prop:ineq}, and in particular $g(m,n)\neq 0$. On the other hand, suppose that $\mu(m)=\mu(n)$. Let $d_0$ be the least divisor of either $m$ or $n$
that does not divide $\gcd(m,n)$, and assume without loss of generality that $d_0\mid m$.  
If $d\mid\gcd(m,n)$ then $\mu(m/d)=\mu(n/d)$.  Thus, from \eqref{eq:gmn}
\[
g(m,n)=\mu(m/d_0)\log(1-x^{d_0})+\sum_{\substack{d\,\mid\,m\\d\,\nmid\, n\\d>d_0}}\mu(m/d)\log(1-x^{d})
-\sum_{\substack{e\,|\,n\\e\,\nmid\, m}}\mu(n/e)\log(1-x^{e}).
\]
Since the $d$'s and $e$'s in these two sums are all different, it follows from Lemma \ref{prop:ineq}
that the sign of $g(m,n)$ is the same as the sign of $\mu(m/d_0)\log(1-x^{d_0})$.  In particular,
it is not 0.  Thus, the case when $m,n$ are squarefree is complete.

Next, we tackle the case where $n$ is not squarefree.  In general, \eqref{eq:gmn} reduces to
\begin{equation}
    \label{eq:gmn2}
    g(m,n)=\sum_{d\,|\,\rad(m)}\mu\left(\frac{\rad(m)}d\right)\log(1-x^{dq(m)})
    -\sum_{e\,|\,\rad(n)}\mu\left(\frac{\rad(n)}e\right)\log(1-x^{eq(n)}).
\end{equation}
Assume that $m$ is squarefree (that is, $q(m)=1$) and $n$ is not squarefree
(that is, $q(n)>1$).  As in the proof of Theorem \ref{thm:ineq}, the sum of all of the
$e$-terms is of the same sign as the $e=1$ term and is majorized by that term.  Hence,
if $q(n)\ge4$, we may majorize all of the $e$-terms by a single term with exponent 4
(which doesn't appear in the $d$-sum).  Thus, by Proposition \ref{prop:ineq}, $g(m,n)$
has the same sign as the $d=1$ term, and so is not 0.

Now say $m$ is squarefree and $q(n)=3$.  If $3\nmid m$, then we can majorize the $e$-terms with
a term with exponent 3.  So, assume that $3\mid m$.  We similarly
may assume that $2\mid m$.  
Note that the $d=6$ term appears with the same sign as the $d=1$ term, and the $d=6$ term
majorizes the sum of all higher $d$-terms via Lemma \ref{prop:ineq}.  Assume without essential
loss of generality that $\mu(m)=1$.  Then, majorizing the $e$-terms with an exponent 3 term and
allowing for the possibility of an exponent 5 term, we have
\[
g(m,n)<\log(1-x)-\log(1-x^{2})-2\log(1-x^{3})-\log(1-x^{5}).
\]
Thus,
\begin{align*}
e^{g(m,n)}&<\frac{1-x}{(1-x^{2})(1-x^{3})^2(1-x^{5})}=\frac1{(1+x)(1-x^{3})^2(1-x^{5})}.	
\end{align*}
By inspection, this expression is less than 1 for $0<x\le\frac{1}{2}$.  Thus,
$g(m,n)\ne0$, completing the proof in this case.

Now assume that $m$ is squarefree and $q(n)=2$.  Assume that $\mu(m)=1$; the case $\mu(m)=-1$ is
essentially the same.  There is an $e=2$ term and it appears with the same sign as the
$d=1$ term.
We may assume that $2\mid m$, since otherwise we may replace a putative $d=2$ term with the $e=1$
term and the sum of all $e$-terms with $e>2$ with a putative $d=4$ term.  If $3\nmid m$, we
replace the terms with $e>2$ with a putative $d=3$ term and observe that
\[
g(m,n)<\log(1-x)-2\log(1-x^{2})-\log(1-x^{3})+\log(1-x^{4})-\log(1-x^{5})-\log(1-x^{6}),
\]
so that
\begin{align*}
e^{g(m,n)}&=\frac{1+x^{2}}{(1+x)(1-x^{3})(1-x^{5})(1-x^{6})}\\
&=\frac{1+x^2}{1+x-x^3-x^4-x^5-2x^6-x^7+x^8+2x^9+x^{10}+x^{11}+x^{12}-x^{14}-x^{15}}.
  \end{align*}
Again, by inspection this shows that $e^{g(m,n)}<1$ for $0<x\le\frac{1}{2}$.
  
  Next, assume that $3\mid m$.  Then $d=6$ occurs and with the
same sign as $d=1$.  If $e=3$ occurs, then this too gives a term with exponent 6 and the same
sign as $d=1$, and so the sum of all $e$ terms with $e\ge3$ gives a contribution with the same
sign as $d=1$.  Otherwise, if $3\nmid n$, then the $e$-terms with $e>2$ all have exponent 10 or
greater, and so their sum is majorized
by a term with exponent 9, which is not a $d$-term.  Thus, we have
\begin{multline*}
g(m,n)<\log(1-x)-2\log(1-x^{2})-\log(1-x^{3})+\log(1-x^{4}) \\ -\log(1-x^{5}) +\log(1-x^{6})-\log(1-x^{9}),
\end{multline*} which is smaller than the expression for $g(m,n)$ in the case $3\nmid m$.  Thus, we
have handled the case $q(n)=2$, and so all of the cases with $m$ squarefree.

Now assume that neither $m$ nor $n$ is squarefree and that $1< q(m)\le q(n)$.  First suppose
that $q(m)=q(n)=q$. Then \eqref{eq:gmn2} becomes
\[
g(m,n)=\sum_{d\,|\,\rad(m)}\mu\left(\frac{\rad(m)}d\right)\log(1-x^{dq})
    -\sum_{e\,|\,\rad(n)}\mu\left(\frac{\rad(n)}e\right)\log(1-x^{eq})
\]
and the proof of the case when $m,n$ are both squarefree can be carried over here.

So, assume that $1<q(m)<q(n)$.  As before, assume that $\mu(\rad(m))=1$.
We claim that the $d=1$ term in \eqref{eq:gmn2} dominates all of
the others.  The sum of the $e$-terms is majorized by the $e=1$ term, which has exponent $q(n)\ge3$.
The sum of the $d$ terms
with $d>1$ is majorized by $|\log(1-x^{2q(m)-1})|$.  If $q(m)=2$, we
thus have exponents 2 (from $d=1$), at least 3 (from $d>1$), and $q(n)\ge3$, so that
\[
g(m,n)<\log(1-x^{2})-2\log(1-x^{3}).
\]
Hence,
\[
e^{g(m,n)}<\frac{1-x^{2}}{(1-x^{3})^2}=\frac{1+x}{1+x+x^2-x^3-x^4-x^5}
\]
which is $<1$ for $0<x\le\frac{1}{2}$.
 If $q(m)>2$ the bound is better, so we are done. \end{proof}

\begin{cor} \label{cor:mainthm} Suppose that $x$ is a real number with $x\ge 2$. If $m,n$ are unequal
positive integers, then $\Phi_m(x)\neq\Phi_n(x)$, except when $x=2$ and $\{m,n\}=\{2,6\}$. \end{cor}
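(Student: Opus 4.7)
The plan is to combine Corollary~\ref{cor:ineq} with the reciprocity of Proposition~\ref{prop:recip} in order to reduce the range $x\ge 2$ to the range $(0,1/2]$ already handled in Theorem~\ref{thm:gmnlemma}. Assume $\Phi_m(x)=\Phi_n(x)$ with $x\ge 2$, $m\ne n$, and, without loss of generality, $\phi(m)\le\phi(n)$.

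I first read Corollary~\ref{cor:ineq} as the uniform bounds $\tfrac12 x^{\phi(n)}\le\Phi_n(x)$ and $\Phi_m(x)<2x^{\phi(m)}$, valid irrespective of the signs of $\mu(\rad(m))$ and $\mu(\rad(n))$. Chaining these through $\Phi_m(x)=\Phi_n(x)$ gives $x^{\phi(n)-\phi(m)}<4$, and since $x\ge 2$ this forces $\phi(n)-\phi(m)\in\{0,1\}$.

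If $\phi(n)-\phi(m)=1$, then because $\phi(k)$ is even for every integer $k\ge 3$ I must have $\phi(m)=1$ and $\phi(n)=2$, so $m\in\{1,2\}$ and $n\in\{3,4,6\}$. For each of these six ordered pairs the equation $\Phi_m(x)=\Phi_n(x)$ is a quadratic in $x$; a direct check produces only the single real root $x\ge 2$ corresponding to $\{m,n\}=\{2,6\}$ and $x=2$, the stated exception. If instead $\phi(m)=\phi(n)$, then either both equal $1$ and $\{m,n\}=\{1,2\}$, in which case $\Phi_2(x)-\Phi_1(x)=2$ is nowhere zero, or $m,n\ge 3$ and Proposition~\ref{prop:recip} makes both polynomials reciprocal. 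Writing $\Phi_k(x)=x^{\phi(k)}\Phi_k(1/x)$ for $k\in\{m,n\}$ and dividing by $x^{\phi(m)}=x^{\phi(n)}$ converts the equation into $\Phi_m(1/x)=\Phi_n(1/x)$ with $1/x\in(0,1/2]$, which is ruled out by Theorem~\ref{thm:gmnlemma}.

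The main step, conceptually, is the reciprocity transfer: it is what matches the range $x\ge 2$ exactly onto the range $(0,1/2]$ already treated. I expect the only tightness issue to be the chained inequality $x^{\phi(n)-\phi(m)}<4$ at $x=2$, where the constants $\tfrac12$ and $2$ in Corollary~\ref{cor:ineq} are just sharp enough to leave the difference in $\{0,1\}$, thereby reducing the theorem to the reciprocity case plus the finite check above.
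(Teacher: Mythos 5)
Your proposal is correct and follows essentially the same route as the paper: use Corollary~\ref{cor:ineq} to force $|\phi(m)-\phi(n)|\le 1$, dispose of the small-$\phi$ cases by a finite check (finding only $\Phi_2(2)=\Phi_6(2)$), and then use the reciprocity of Proposition~\ref{prop:recip} to transfer $\phi(m)=\phi(n)$ solutions with $x\ge2$ to solutions at $1/x\in(0,\tfrac12]$, contradicting Theorem~\ref{thm:gmnlemma}. Your write-up is, if anything, slightly more explicit than the paper's about the uniform bounds and the six small ordered pairs, but the argument is the same.
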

\begin{proof} We first note that Corollary~\ref{cor:ineq} immediately gives us the cases when $|\phi(m)-\phi(n)|\ge2$, so we may assume that either $\phi(m)=\phi(n)$ or they are the numbers 1, 2.
In the latter case we quickly verify the sole solution $\Phi_2(2)=\Phi_6(2)$, which leaves $\phi(m)=\phi(n)\ge4$.
If $x\ge2$ and $\Phi_m(x)=\Phi_n(x)$, then Proposition \ref{prop:recip} implies that $\Phi_m(1/x)=\Phi_n(1/x)$,
in violation of Theorem \ref{thm:gmnlemma}.  This completes the proof.
\end{proof}

\begin{cor}\label{cor:neg}
Suppose that $x$ is a real number with either $x\le-2$ or $x\in[-\frac12,0)$.  Then for distinct positive
integers $m,n$, we have $\Phi_m(x)\ne\Phi_n(x)$.
\end{cor}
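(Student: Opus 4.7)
\noindent\emph{Proof plan.} The plan is to use Proposition~\ref{prop:neg} to convert $\Phi_m(x)=\Phi_n(x)$ into an identity in $y=-x\in[2,\infty)\cup(0,\frac12]$, then invoke Corollary~\ref{cor:mainthm} or Theorem~\ref{thm:gmnlemma}. Let $\tau$ be the involution on positive integers with $\tau(k)=2k$ for $k$ odd, $\tau(k)=k/2$ for $k\equiv 2\pmod 4$, and $\tau(k)=k$ for $4\mid k$; Proposition~\ref{prop:neg} then gives $\Phi_k(-y)=\Phi_{\tau(k)}(y)$ for every integer $k\ge 3$, and $\tau$ restricts to a bijection on the set of integers $\ge 3$.

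First I would handle the main case $m,n\ge 3$, where the hypothesis becomes $\Phi_{\tau(m)}(y)=\Phi_{\tau(n)}(y)$ with both indices at least $3$. When $y\ge 2$, Corollary~\ref{cor:mainthm} applies---the exceptional set $\{2,6\}$ being excluded since neither index equals $2$---and when $y\in(0,\frac12]$, Theorem~\ref{thm:gmnlemma} applies. In either case $\tau(m)=\tau(n)$, and the bijectivity of $\tau$ forces $m=n$, a contradiction.

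The cases where $m$ or $n$ lies in $\{1,2\}$ remain. The pair $\{m,n\}=\{1,2\}$ is immediate since $\Phi_1-\Phi_2\equiv -2$. For $m=1$ and $n\ge 3$, we have $\Phi_1(x)=x-1<0$ while $\Phi_n(x)=\Phi_{\tau(n)}(y)>0$; here positivity of $\Phi_k$ on $(0,\infty)$ for $k\ge 2$ follows from Corollary~\ref{cor:ineq} combined with Proposition~\ref{prop:recip}. The same sign argument settles $m=2$, $n\ge 3$, $x\le -2$.

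The delicate remaining subcase is $m=2$, $n\ge 3$, $x\in[-\frac12,0)$, where $\Phi_2(x)=1-y$ and $\Phi_n(x)=\Phi_{\tau(n)}(y)$ are both positive but $1-y$ is not of the form $\Phi_\ell(y)$, so Theorem~\ref{thm:gmnlemma} does not apply directly; this is the main obstacle. To surmount it, I would apply Theorem~\ref{thm:ineq} at $z=1/y\ge 2$ together with Proposition~\ref{prop:recip} to rewrite $\Phi_k(y)=y^{\phi(k)}\Phi_k(1/y)$, obtaining $\Phi_k(y)>1-y^{q(k)}\ge 1-y$ when $\mu(\rad(k))=1$ and $\Phi_k(y)>1>1-y$ when $\mu(\rad(k))=-1$. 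Either way, $\Phi_{\tau(n)}(y)>1-y=\Phi_2(x)$, completing the argument.
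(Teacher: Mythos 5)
Your proof is correct and follows the paper's own route: use Proposition~\ref{prop:neg} to convert the equation at negative $x$ into one at $y=-x\in(0,\tfrac12]\cup[2,\infty)$, then invoke Theorem~\ref{thm:gmnlemma} (for $y\le\tfrac12$) and Corollary~\ref{cor:mainthm} (for $y\ge2$), noting that the exceptional pair $\{2,6\}$ cannot arise. The only difference is that you spell out the small-index cases $m,n\in\{1,2\}$ (where the sign-flip identity of Proposition~\ref{prop:neg} does not literally apply), including the subcase $m=2$, $x\in[-\tfrac12,0)$ via Theorem~\ref{thm:ineq} and Proposition~\ref{prop:recip}, details the paper's one-line proof treats as immediate.
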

\begin{proof}
Using Proposition \ref{prop:neg}, this result follows immediately from Theorem \ref{thm:gmnlemma} in
the case that $x\in[-\frac12,0)$ and from Corollary \ref{cor:mainthm} when $x\le-2$.
\end{proof}
Note that Theorem \ref{thm:gmnlemma}, Corollary \ref{cor:mainthm}, and Corollary \ref{cor:neg} immediately give us Theorem \ref{thm:principal}.

\section{An ordering based on cyclotomic polynomials}

A consequence of Corollary~\ref{cor:mainthm} is that we can put an ordering on the positive integers based on the values of cyclotomic polynomials at any $x>2$. More precisely, fix any $x>2$. We write $m\prec n$ if $\Phi_m(x)<\Phi_n(x)$. By Corollary~\ref{cor:mainthm}, $\prec$ is a strict total ordering on the positive integers
which does not depend on the choice of $x$. It is natural to ask about the properties of this ordering.

The first observation is that this ordering is the lexicographic ordering on cyclotomic polynomials. More precisely, suppose $m$ and $n$ are distinct positive integers, and write \[\Phi_m(x)=\sum_{i=0}^\infty a_ix^i,\qquad \Phi_n(x)=\sum_{i=0}^\infty b_ix^i,\] so that $a_i=0$ for $i>\phi(m)$ and $b_i=0$ for $i>\phi(n)$, and each $a_i$ and $b_i$ is an integer. Let $i$ be the smallest integer such that $a_i\neq b_i$. Then $\Phi_m<\Phi_n$ in the lexicographic ordering if $a_i<b_i$, and $\Phi_m>\Phi_n$ if $a_i>b_i$.

\begin{prop} The ordering $\prec$ on the positive integers coincides with the lexicographic ordering on the cyclotomic polynomials. \end{prop}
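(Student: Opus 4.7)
The plan is to split into cases depending on whether $\phi(m) = \phi(n)$, exploiting the reciprocity of cyclotomic polynomials in one case and the explicit bounds of Corollary \ref{cor:ineq} in the other.

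First suppose $\phi(m) = \phi(n) = d$. Both $\Phi_m$ and $\Phi_n$ are reciprocal by Proposition \ref{prop:recip}, so
\[
\Phi_m(x) - \Phi_n(x) = x^d \bigl(\Phi_m(1/x) - \Phi_n(1/x)\bigr).
\]
For $x > 2$, set $y = 1/x \in (0, 1/2)$; the sign of $\Phi_m(x) - \Phi_n(x)$ then matches that of $\Phi_m(y) - \Phi_n(y)$. Theorem \ref{thm:gmnlemma} says this latter difference is nonzero throughout $(0, 1/2]$, so by continuity its sign is constant on that interval. As $y \to 0^+$ one has $\Phi_m(y) - \Phi_n(y) \sim (a_{i_0} - b_{i_0})\,y^{i_0}$, where $i_0$ is the least index at which the coefficients differ, so the constant sign equals $\operatorname{sgn}(a_{i_0} - b_{i_0})$ --- precisely the lex comparison. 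Palindromy forces this to coincide with the sign at the largest index of disagreement as well, reconciling the two natural lex conventions.

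Next suppose $\phi(m) \ne \phi(n)$, say $\phi(m) < \phi(n)$. Corollary \ref{cor:ineq} gives $\Phi_m(x) < 2 x^{\phi(m)}$ and $\Phi_n(x) \ge \tfrac12 x^{\phi(n)}$ for $x \ge 2$, which already yields $\Phi_m(x) < \Phi_n(x)$ whenever $\phi(n) \ge \phi(m) + 2$. Since $\phi(k)$ is even for $k \ge 3$, the only remaining possibility $|\phi(m) - \phi(n)| = 1$ forces $\{\phi(m), \phi(n)\} = \{1, 2\}$, leaving only the finitely many pairs with $m \in \{1,2\}$ and $n \in \{3,4,6\}$; these are dispatched by direct computation (the pair $\{2,6\}$ needing the strict inequality $x > 2$, in view of $\Phi_2(2) = \Phi_6(2)$). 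In each unequal-degree pair, the lex comparison is controlled by the top nonzero coefficient of the higher-degree polynomial (namely $1$ against $0$), and again matches the pointwise ordering.

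The main obstacle is the unequal-degree case: reciprocity no longer produces a direct sign reduction, and one must instead rely on the pointwise bounds of Corollary \ref{cor:ineq} together with a finite case check for the $|\phi(m) - \phi(n)| = 1$ exceptions. The equal-degree case is the conceptual heart and is nearly immediate from the change of variable $y = 1/x$ and Theorem \ref{thm:gmnlemma}.
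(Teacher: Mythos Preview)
Your argument is correct (with the minor caveat that the reciprocity identity in the equal-degree case requires $m,n>1$, since $\Phi_1$ is not self-reciprocal; the pair $\{1,2\}$ must be checked separately, which is trivial). However, it is considerably more elaborate than the paper's proof, which is essentially one sentence: since Corollary~\ref{cor:mainthm} has \emph{already} established that $\prec$ does not depend on the choice of $x>2$, one may simply let $x\to\infty$, where the sign of $f_{m,n}(x)=\Phi_m(x)-\Phi_n(x)$ is determined by its leading coefficient---and that is the lexicographic comparison. Your proof, by contrast, re-derives the well-definedness of $\prec$ case by case (reciprocity plus Theorem~\ref{thm:gmnlemma} when $\phi(m)=\phi(n)$; Corollary~\ref{cor:ineq} plus a finite check when $\phi(m)\ne\phi(n)$), rather than invoking Corollary~\ref{cor:mainthm} as a black box. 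What your route does buy is that the $y\to0^+$ limit makes the connection to the \emph{lowest}-index lex convention explicit, which is literally how the paper states the definition; the paper's own one-line proof tacitly uses the \emph{highest}-index convention via the leading coefficient. Your remark on palindromy reconciles the two conventions in the equal-degree case, but note that in the unequal-degree case the two conventions can genuinely disagree (e.g.\ $\Phi_2$ versus $\Phi_4$), so the proposition as literally stated in the paper only holds under the leading-coefficient reading.
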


\begin{proof} Let $f_{m,n}(x)=\Phi_m(x)-\Phi_n(x)$. If $\Phi_m>\Phi_n$ in the lexicographic ordering, then the leading coefficient of $f_{m,n}$ is positive, so for sufficiently large $x$, we have $f_{m,n}(x)>0$. \end{proof}

Note in particular that if $m\prec n$, then $\phi(m)\le\phi(n)$. Thus in the ordering, we first sort the positive integers by their $\phi$-value, and then sort the cyclotomic polynomials lexicographically within each $\phi$-value. Since for any $k$ there are only finitely many positive integers $n$ with $\phi(n)=k$, it follows that the order type of the positive integers with respect to $\prec$ is $\omega$.

It is interesting to identify consecutive pairs in the ordering $\prec$. While this seems to be difficult in general, we can identify certain consecutive pairs.

\begin{prop}\label{prop:pairs}
Let $p$ be an odd prime and $i\ge 2$ an integer. Then $2p^i$ and $p^i$ are consecutive with respect to $\prec$, and $2p^i\prec p^i$.
\end{prop}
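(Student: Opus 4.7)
The plan is to first verify the direction $2p^i\prec p^i$ by an explicit subtraction, and then to rule out any $n\notin\{p^i,2p^i\}$ strictly between them in the $\prec$ ordering through a coefficient argument combining reciprocity with the sparse structure $\Phi_n(x)=\Phi_{\rad(n)}(x^{q(n)})$.

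For the direction, I would iterate Proposition \ref{prop:Phinp} to get $\Phi_{p^i}(x)=\Phi_p(x^{p^{i-1}})=\sum_{k=0}^{p-1}x^{kp^{i-1}}$ and, using that $p^{i-1}$ is odd, apply Proposition \ref{prop:neg} to obtain $\Phi_{2p^i}(x)=\Phi_{p^i}(-x)=\sum_{k=0}^{p-1}(-1)^kx^{kp^{i-1}}$. Both have degree $N:=(p-1)p^{i-1}$, and $\Phi_{p^i}(x)-\Phi_{2p^i}(x)=2\sum_{\text{odd }k,\,1\le k\le p-2}x^{kp^{i-1}}$ is positive for $x>0$, so $2p^i\prec p^i$.

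For consecutiveness, suppose $n\notin\{p^i,2p^i\}$ satisfies $\Phi_{2p^i}(x)<\Phi_n(x)<\Phi_{p^i}(x)$ for $x>2$. Because the extremes have degree $N$ and leading coefficient $1$, $\phi(n)=N$. The lexicographic (top-down) description of $\prec$ then forces the coefficients $a_{N-1},a_{N-2},\dots,a_{N-p^{i-1}+1}$ of $\Phi_n$ all to vanish. The classical identity $a_{N-1}=-\mu(n)$ (from $-\sum\zeta$ over primitive $n$-th roots of unity) gives $\mu(n)=0$, so $n$ is not squarefree; set $r=\rad(n)$, $q=q(n)\ge2$. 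By Proposition \ref{prop:Phinp}, $\Phi_n(x)=\Phi_r(x^q)$, and by the reciprocity of Proposition \ref{prop:recip} the vanishing of $a_{N-1},\dots,a_{N-p^{i-1}+1}$ is equivalent to the vanishing of $a_1,\dots,a_{p^{i-1}-1}$. Since the coefficient of $x$ in $\Phi_r$ is $-\mu(r)=\pm1$ (as $r\ge2$ is squarefree), avoiding a nonzero coefficient at $k=q$ in this range forces $q\ge p^{i-1}$; combined with $q\phi(r)=(p-1)p^{i-1}$ this gives $\phi(r)\le p-1$.

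The remaining case analysis on whether $p\mid n$ finishes the proof. If $p\mid n$, writing $n=p^jm$ with $\gcd(m,p)=1$ gives $p^{j-1}\phi(m)=p^{i-1}$; since $p$ is odd, $\phi(m)=p^{i-j}$ is odd and at least $p\ge3$ whenever $i-j\ge1$, but $\phi(m)$ is odd only for $m\in\{1,2\}$ (where it equals $1$), forcing $j=i$ and $n\in\{p^i,2p^i\}$, both excluded. If $p\nmid n$, then $p\nmid q$ (its primes all divide $n$), yet $\phi(r)\le p-1<p$ prevents $p\mid\phi(r)$, so from $q\phi(r)=(p-1)p^{i-1}$ we obtain $p^{i-1}\mid q$, contradicting $p\nmid q$ for $i\ge2$. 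The most delicate step is the reciprocity transfer that recasts the lex-top constraint as a lex-bottom constraint, since it is only there that the structure $\Phi_n(x)=\Phi_r(x^q)$ together with $-\mu(r)\ne0$ cheaply forces $q\ge p^{i-1}$; the remaining $\phi$-value enumeration is routine.
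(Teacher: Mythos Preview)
Your proof is correct. Both you and the paper reduce to the same key observation: for $n$ to lie strictly between $2p^i$ and $p^i$ one needs $q(n)\ge p^{i-1}$, equivalently the ``gap'' $\gamma(n)=q(n)$ to the second nonzero coefficient must be at least that of $\Phi_{p^i}$ and $\Phi_{2p^i}$. The paper packages this via a preliminary proposition that $\Phi_n(x)=x^{\phi(n)}-\mu(\rad(n))x^{\phi(n)-q(n)}+O(x^{\phi(n)-q(n)-1})$, while you obtain it through reciprocity together with $\Phi_n(x)=\Phi_r(x^{q})$ and the fact that the linear coefficient of $\Phi_r$ is $-\mu(r)\ne 0$; these are equivalent.

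The only substantive difference is the endgame. You finish with a case split on whether $p\mid n$: the case $p\mid n$ forces $n\in\{p^i,2p^i\}$ by parity of $\phi$, and the case $p\nmid n$ collides $p^{i-1}\mid q\,\phi(r)$ with $p\nmid q$ and $\phi(r)\le p-1$. The paper instead observes (implicitly handling the $p\mid n$ case) that any other $n$ with $\phi(n)=(p-1)p^{i-1}$ has a prime factor $q'$ with $p\mid(q'-1)$, and then uses the clean identity
\[
\frac{q(n)}{\phi(n)}=\prod_{\ell\mid n}\frac{1}{\ell-1}
\]
to conclude $q(n)/\phi(n)\le 1/(q'-1)<1/(p-1)=q(p^i)/\phi(p^i)$, hence $q(n)<p^{i-1}$. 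The paper's route is a bit slicker and yields the stronger statement that every other $n$ with the same $\phi$-value has strictly smaller gap; your route is more explicit and self-contained. Both are fine.
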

We defer the proof until later in this section.

\begin{defn} The \emph{gap} $\gamma(n)$ of $n$ is equal to $\phi(n)-i$, where $i$ is the largest integer less than $\phi(n)$ for which the coefficient of $x^i$ in $\Phi_n(x)$ is nonzero. \end{defn}

\begin{prop} For any positive integer $n$, we have $\gamma(n)=q(n)$. More precisely, for
$x\ge2$, we have \[\Phi_n(x)=x^{\phi(n)}-\mu(\rad(n))x^{\phi(n)-\gamma(n)}+O(x^{\phi(n)-\gamma(n)-1}).\]  \end{prop}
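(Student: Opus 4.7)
The plan is to reduce to the squarefree case using Proposition~\ref{prop:Phinp} and then read off the top two coefficients of $\Phi_n$ directly from its factorisation as a product over primitive roots of unity.

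First I would dispose of the squarefree case. The case $n=1$ is immediate from $\Phi_1(x) = x-1$. For squarefree $n > 1$, writing $\Phi_n(x) = \prod_\zeta (x - \zeta)$ with $\zeta$ running through the primitive $n$-th roots of unity shows that the coefficient of $x^{\phi(n)-1}$ equals $-\sum_\zeta \zeta$. The classical identity $\sum_\zeta \zeta = \mu(n)$---which follows by Möbius inversion from the relation $\sum_{d \mid n}\bigl(\sum_{\zeta \text{ primitive } d\text{-th}} \zeta\bigr) = [n=1]$, i.e.\ the sum of all $n$-th roots of unity vanishes for $n > 1$---then gives
\[
\Phi_n(x) = x^{\phi(n)} - \mu(n)\, x^{\phi(n)-1} + O\bigl(x^{\phi(n)-2}\bigr),
\]
which matches the claim since $q(n) = 1$ and $\rad(n) = n$ in this case.

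For general $n$, set $r = \rad(n)$ and $q = q(n)$. By Proposition~\ref{prop:Phinp}, $\Phi_n(x) = \Phi_r(x^q)$, and substituting the squarefree expansion above (with $\phi(n) = q\phi(r)$) yields
\[
\Phi_n(x) = x^{\phi(n)} - \mu(\rad(n))\, x^{\phi(n)-q(n)} + O\bigl(x^{\phi(n)-2q(n)}\bigr).
\]
Since $q(n) \ge 1$, the error is absorbed into the asserted $O(x^{\phi(n)-q(n)-1})$. Moreover $\Phi_r(x^q)$ contains only powers of $x^q$, so every coefficient of $x^i$ with $\phi(n) - q(n) < i < \phi(n)$ is automatically $0$; combined with $\mu(\rad(n)) = \pm 1 \ne 0$, this pins down $\gamma(n) = q(n)$ exactly, not merely as a lower bound.

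There is really no obstacle here: the only substantive input is the Ramanujan-sum identity $\sum_\zeta \zeta = \mu(n)$, and the rest is bookkeeping driven by $\Phi_n(x) = \Phi_{\rad(n)}(x^{q(n)})$. If one preferred not to invoke the Ramanujan-sum fact as a black box, an equivalent route is to expand $\Phi_n(x) = \prod_{d \mid n}(x^d-1)^{\mu(n/d)}$ from Proposition~\ref{prop:mobiusphi} by the binomial theorem and collect the two highest-order terms using $\sum_{d \mid n}\mu(n/d) = [n=1]$ and $\sum_{d \mid n} d\,\mu(n/d) = \phi(n)$; this is slightly more tedious but completely self-contained.
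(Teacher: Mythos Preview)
Your proof is correct, and the reduction from general $n$ to squarefree $n$ via Proposition~\ref{prop:Phinp} is exactly what the paper does. The difference lies in the squarefree step: the paper argues by induction on $\omega(n)$, using the identity $\Phi_{np}(x)=\Phi_n(x^p)/\Phi_n(x)$ for a prime $p\nmid n$ and dividing the two asymptotic expansions to extract the top two terms of $\Phi_{np}$. Your route is more direct: you read off the coefficient of $x^{\phi(n)-1}$ as minus the sum of the primitive $n$th roots of unity and invoke the Ramanujan-sum identity $\sum_\zeta\zeta=\mu(n)$. Your argument is shorter and avoids the mildly fiddly quotient computation, at the cost of importing a classical fact that the paper's induction establishes from scratch; the alternative you sketch at the end via Proposition~\ref{prop:mobiusphi} would also work and is closer in spirit to being self-contained.
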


\begin{proof} We first prove that when $n$ is squarefree, then $\gamma(n)=1$ and that \[\Phi_n(x)=x^{\phi(n)}-\mu(\rad(n))x^{\phi(n)-1}+O(x^{\phi(n)-2}).\] We prove this by induction on the number $\omega(n)$ of prime factors of $n$. When $\omega(n)=0$, i.e.\ $n=1$, we have $\Phi_1(x)=x-1$, so the result follows. Next, suppose that the result holds for $n$, where $\omega(n)= k\ge0$, and $p$ is a prime such that $p\nmid n$. Then we have \[\Phi_{np}(x)=\frac{\Phi_n(x^p)}{\Phi_n(x)}.\] By the inductive hypothesis, we thus have 
\begin{align*} \Phi_{np}(x) &= \frac{x^{p\phi(n)}-\mu(n)x^{p\phi(n)-p}+O(x^{p\phi(n)-2p})}{x^{\phi(n)}-\mu(n)x^{\phi(n)-1}+O(x^{\phi(n)-2})} \\ 
&=\frac{x^{(p-1)\phi(n)}+O(x^{(p-1)\phi(n)-p})}{1-\mu(n)x^{-1}+O(x^{-2})} \\ 
&= x^{(p-1)\phi(n)}+\mu(n)x^{(p-1)\phi(n)-1}+O(x^{(p-1)\phi(n)-2}).
\end{align*} 
Since $(p-1)\phi(n)=\phi(pn)$, the proof is complete in the squarefree case.

We reduce the non-squarefree case to the squarefree case using Proposition \ref{prop:Phinp}, completing
the proof.
\end{proof}

We now prove Proposition~\ref{prop:pairs}.

\begin{proof}[Proof of Proposition~\ref{prop:pairs}] It suffices to show that, among all the numbers $n$ with $\phi(n)=\phi(p^i)$, we have $\gamma(p^i)>\gamma(n)$ unless $n\in\{p^i,2p^i\}$. We have $\phi(p^i)=(p-1)p^i$, so if $n$ is not equal to $p^i$ or $2p^i$ but $\phi(n)=\phi(p^i)$, then $n$ must have a prime factor $q$ such that $p\mid (q-1)$. In particular, $q>p$. Now, note that if $n=q_1^{e_1}\cdots q_r^{e_r}$, then we have \[\frac{\gamma(n)}{\phi(n)}=\frac{\prod_{i=1}^rq_i^{e_i-1}}{\prod_{i=1}^r(q_i-1)q_i^{e_i-1}} =\prod_{i=1}^r\frac{1}{q_i-1}<\frac{1}{p-1}=\frac{\gamma(p^i)}{\phi(p^i)}.\] Since $\phi(n)=\phi(p^i)$, we therefore have $\gamma(n)<\gamma(p^i)$, as desired. \end{proof}

If $i=1$, then it is not always true that $2p$ and $p$ are consecutive with respect to $\prec$. However, they are $\prec$-consecutive when $2p$ and $p$ are the only integers whose $\phi$-values are equal to $p-1$. When $p\equiv 3\pmod{4}$, there is a simple criterion.

\begin{prop} Let $p\equiv 3\pmod{4}$ be prime. Then $2p$ and $p$ are $\prec$-consecutive unless there is a prime $q$ and an integer $j\ge 2$ such that $\phi(q^j)=p-1$. \end{prop}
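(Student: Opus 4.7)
The plan is to reduce the statement to a classification of the fiber $\phi^{-1}(p-1)$ of Euler's totient. Since $\prec$ sorts integers first by the value of $\phi$ and then lexicographically within each $\phi$-fiber, $p$ and $2p$ (both having $\phi$-value $p-1$) are $\prec$-consecutive precisely when no third positive integer satisfies $\phi(n)=p-1$. The goal therefore becomes: show that if no prime power $q^j$ with $j\ge 2$ satisfies $\phi(q^j)=p-1$, then $\phi^{-1}(p-1)=\{p,2p\}$.

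The key input is the hypothesis $p\equiv 3\pmod 4$, which says that $p-1$ is exactly twice an odd number. Writing $n=2^a q_1^{e_1}\cdots q_r^{e_r}$ with distinct odd primes $q_i$, every factor $q_i-1$ is even, so each odd prime divisor of $n$ contributes at least one factor of $2$ to $\phi(n)$; and if $a\ge 2$, then $\phi(2^a)=2^{a-1}$ contributes $a-1$ additional factors of $2$. Equating the $2$-adic valuations of $\phi(n)$ and $p-1$ then forces either $r=0$ with $a=2$ (so $n=4$, only consistent with $p=3$), or $r=1$ with $a\in\{0,1\}$ and the unique odd prime factor $q_1\equiv 3\pmod 4$.

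In the latter case, writing $q=q_1$ and $e=e_1$, we have $\phi(n)=(q-1)q^{e-1}=\phi(q^e)$ and $n\in\{q^e,2q^e\}$. If $e=1$, then $q-1=p-1$ forces $q=p$ and $n\in\{p,2p\}$. If $e\ge 2$, then $q^e$ is precisely a prime power with $j=e\ge 2$ satisfying $\phi(q^e)=p-1$, matching the exception clause of the proposition; the outlier $n=4$ similarly fits the exception with $q=2,j=2$. Under the stated hypothesis both of these possibilities are ruled out, so $\phi^{-1}(p-1)=\{p,2p\}$ and the proof is complete. The argument is essentially an elementary $2$-adic bookkeeping exercise and I foresee no serious obstacle; the main content is just recognizing how tightly the single constraint ``$p-1$ is twice an odd number'' pins down the factorization of $n$.
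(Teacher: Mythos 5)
Your proof is correct and follows essentially the same route as the paper: comparing the $2$-adic valuation of $\phi(n)$ with that of $p-1\equiv 2\pmod 4$ to force $n$ to be an odd prime power or twice one (with $n=4$ as the boundary case), exactly as the paper does via ``$\phi(n)\equiv 0\pmod{2^{\omega(n)}}$ for odd $n$'' and the $n=2^e m$ split. One small caveat: your opening ``precisely when no third integer satisfies $\phi(n)=p-1$'' overstates what is true (and what is needed) --- adjacency does not require the fiber to have only two elements, as Proposition \ref{prop:pairs} shows --- but since you only use the correct direction (empty third part of the fiber implies consecutiveness), the argument stands.
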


\begin{proof} Note that $p-1\equiv 2\pmod{4}$, and that $\phi(q)$ is even for every odd prime $q$. Since $\phi$ is multiplicative, if $n$ is odd, then $\phi(n)\equiv 0\pmod{2^{\omega(n)}}$. Thus an odd $n$ with $\phi(n)=p-1$ can only have one prime factor. Next, suppose that $n=2^em$, where $m$ is odd and $e\ge 2$. Then $\phi(n)=2^{e-1}\phi(m)$ is divisible by 4 unless $e=2$ and $m=1$, in which case $n$ is a prime power. If $e=1$, then $\phi(n)=\phi(m)$, so we have already analyzed this case. \end{proof}

We remark that very few primes $p\equiv3\pmod 4$ have the property that $p-1=\phi(q^j)$ for some
prime $q$ and exponent $j>1$.  An easy argument shows that the number of such primes $p\le x$ is
$O(\sqrt{x})$. 

When $p\equiv 1\pmod{4}$, there are more ways for there to exist an integer $n$ other than $p$ and $2p$ with $\phi(n)=p-1$. Still, this is relatively unusual behavior: Theorem 4.1 in~\cite{BFLPS05} shows that the number of such primes up to $x$ is $\le\frac{x}{\log^{2+o(1)}x}$ as $x\to\infty$.  On the other hand, it is not known unconditionally
if there are infinitely many such primes, though this follows from Schinzel's Hypothesis H.

There is another total ordering we can put on the positive integers based on the values of cyclotomic polynomials at some $x\in(0,\frac{1}{2}]$, thanks to Theorem~\ref{thm:gmnlemma}. Let us write $m\prec'n$ if $\Phi_m(x)<\Phi_n(x)$ for some (hence any) $x\in(0,\frac{1}{2}]$. Like $\prec$, $\prec'$ is also a lexicographic ordering, but in reverse order of degrees. That is, suppose \[\Phi_m(x)=\sum_{i=0}^\infty a_ix^i,\qquad \Phi_n(x)=\sum_{i=0}^\infty b_ix^i.\] If $m\neq n$, then let $i$ be the smallest nonnegative integer for which $a_i\neq b_i$. Then $m\prec' n$ if $a_i<b_i$, and $n\prec' m$ if $b_i<a_i$.

Unlike $\prec$, $\prec'$ is not a well-ordering. To see this, we produce an infinite decreasing sequence. Let $p$ be any prime. Then for any positive integer $i$, we have $\Phi_{p^i}(x)=1+x^{p^{i-1}}+O(x^{p^{i-1}+1})$ as $x\to 0$. Thus we have $p^{i+1}\prec' p^i$, so the powers of $p$ form an infinite decreasing sequence. 
In addition, the sequence of primes forms an infinite increasing sequence, which implies that the reverse of $\prec'$ is not a well-ordering either.
It would be interesting to describe the order type of $\prec'$.

\section{Near misses}

Other than $\Phi_2(2)=\Phi_6(2)$, we have shown that all real roots of 
of $\Phi_m-\Phi_n$ are smaller than 2. It is natural to ask whether there are roots that get close. To this end, let \[S=\{\alpha\in\RR:\Phi_m(\alpha)=\Phi_n(\alpha)\text{ for some } m\neq n\}.\] Thus we ask whether 2 is a limit point of $S$. We begin with some examples: 
\begin{itemize}
\item $\Phi_{209}-\Phi_{179}$ has a root at $1.99975454398254\cdots$,
\item $\Phi_{407}-\Phi_{359}$ has a root at $1.99975550093366\cdots$, 
\item $\Phi_{221}-\Phi_{191}$ has a root at $1.99993512065828\cdots$,
\item $\Phi_{527}-\Phi_{479}$ has a root at $1.99999618493891\cdots\,$. \end{itemize}

These near-misses were constructed as follows: let $p,q,r$ be primes such that $pq=p+q+r$, and $p<q$. Then we claim that $\Phi_{pq}-\Phi_r$ has a root very close to the largest real root of $\psi_{p-1}(x)=x^{p-1}-x^{p-2}-x^{p-3}\cdots-x-1$, with this root getting closer the larger that $q$ is. Note that the latter polynomial has a root very close to 2, since $\psi_{p-1}(2)=1$ and $\psi'_{p-1}(2)=2^{p-1}-1$, so the largest real root of $\psi_{p-1}$ is approximately $2-\frac{1}{2^{p-1}-1}$. Let us write $\alpha_{p-1}$ for the largest real root of $\psi_{p-1}$.

The reason why $\Phi_{pq}-\Phi_r$ has a root very close to $\alpha_{p-1}$ is that we have a near-factorization of $\Phi_{pq}-\Phi_r$, namely 
\[\Phi_{pq}(x)-\Phi_r(x)=\psi_{p-1}(x)x^{\phi(pq)-\phi(p)}
+\delta(x),\] 
where $\deg(\delta)\le \phi(pq)-p$. Furthermore, by Proposition~\ref{prop:2coeffs}, all the coefficients of $\delta$ lie in $\{-2,-1,0,1\}$. Note that the degree of $\delta$ is much smaller than the degree of the main term $\psi_{p-1}(x)x^{\phi(pq)-\phi(p)}$,
so this is a small perturbation.

In general, suppose we have a polynomial $f(x)$ all of whose complex roots are distinct, and a perturbation polynomial $g(x)$ with $\deg(g)<\deg(f)$. Let us suppose that $f(x)+tg(x)$ factors as $\prod_{i=1}^d (x-\beta_i(t))$, where the $\beta_i$'s are continuous functions for small values of $t$. Then we have \[\beta_i'(0)=-\frac{g(\beta_i(0))}{f'(\beta_i(0))}\] (see~\cite{Wilkinson84}). In our case, with $g=\delta$, we expect to have a root of $\Phi_{pq}-\Phi_r$ near \[\alpha_{p-1}-\frac{\delta(\alpha_{p-1})}{\Phi_{pq}'(\alpha_{p-1})-\Phi_r'(\alpha_{p-1})-\delta'(\alpha_{p-1})}.\] Since $|\delta(\alpha_{p-1})|\le 2^{pq-2p-q+3}$ and the denominator has size on the order of $2^{pq-p-q}$, we have a root of $\Phi_{pq}-\Phi_r$ somewhere around \[\alpha_{p-1}-\frac{1}{2^q}.\] This matches experimental observation, as shown in Table~\ref{tab:erroranalysis}. Here $\beta$ is the root of $\Phi_{pq}-\Phi_r$ close to $\alpha_{p-1}$.

\begin{table}
    \begin{tabular}{ccc|cccc}
    $p$ & $q$ & $r$ & $\beta$ & $\alpha_{p-1}$ & $(\alpha_{p-1}-\beta)^{-1}$ & $\frac{1}{2^q(\alpha_{p-1}-\beta)}$ \\ \hline
    3 & 5 & 7 & 1.90040519768798 & 1.92756197548293 & 36.8232198808926 & 1.15072562127789 \\ 3 & 7 & 11 & 1.92172452309274 & 1.92756197548293 & 171.307607010499 & 1.33834067976952 \\ 3 & 11 & 19 & 1.92717413781454 & 1.92756197548293 & 2578.39833911685 & 1.25898356402190 \\ 3 & 13 & 23 & 1.92745816209718 & 1.92756197548293 & 9632.66916662882 & 1.17586293537949 \\ \hline 5 & 7 & 23 & 1.97926028654319 & 1.98358284342433 & 231.344555433128 & 1.80737933932131 \\ 5 & 13 & 47 & 1.98351307615232 & 1.98358284342433 & 14333.3682296163 & 1.74967873896684 \\ 5 & 19 & 71 & 1.9835169859533 & 1.98358284342433 & 873492.901563983 & 1.66605549156949 \\ \hline 7 & 11 & 59 & 1.99577873757697 & 1.99603117973541 & 3961.30347707098 & 1.93423021341356 \\ 7 & 13 & 71 & 1.99596788607732 & 1.99603117973541 & 15799.3712194387 & 1.92863418206039 \\ 7 & 19 & 107 & 1.99603017934944 & 1.99603117973541 & 999614.177077968 & 1.90661273398964
    \end{tabular}
    \caption{}
    \label{tab:erroranalysis}
\end{table}

\begin{conj}\label{con:limit} The largest limit point of $S$ is $2$. \end{conj}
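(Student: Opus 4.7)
The direction that no limit point of $S$ exceeds $2$ is immediate from Theorem \ref{thm:principal}, which confines $S$ to $[-2, 2]$; the content of Conjecture \ref{con:limit} is therefore the assertion that $2$ itself is a limit point of $S$. My plan is to make the near-miss construction of the preceding section rigorous, conditional on the prime $k$-tuples conjecture.

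Fix an odd prime $p \ge 3$. The polynomial $\psi_{p-1}(x) = x^{p-1} - x^{p-2} - \cdots - 1$ has a unique real root $\alpha_{p-1} \in (1, 2)$, characterized also as the root in $(1, 2)$ of $x^p - 2x^{p-1} + 1$; from $\alpha_{p-1} = 2 - \alpha_{p-1}^{-(p-1)}$ one reads $2 - \alpha_{p-1} = O(2^{-p})$, so $\alpha_{p-1} \to 2$ as $p \to \infty$. The pair of linear forms $L_1(q) = q$ and $L_2(q) = (p-1)q - p$ is admissible (check $\ell = 2$, $\ell = p$, $\ell \mid p-1$, and the remaining primes separately), so the prime $k$-tuples conjecture predicts infinitely many primes $q > p$ for which $r := pq - p - q$ is also prime. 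For any such $(q, r)$, I claim that $\Phi_{pq} - \Phi_r$ has a real root in $(\alpha_{p-1}, 2)$. At $x = 2$, Corollary \ref{cor:ineq} gives $\Phi_{pq}(2) \ge 2^{\phi(pq) - 1} = 2^r > 2^r - 1 = \Phi_r(2)$. At $x = \alpha_{p-1}$, setting $A := \alpha_{p-1}^{p-1}$ and using $\alpha_{p-1}^p - 1 = 2(A - 1)$, one computes $\Phi_p(\alpha_{p-1}) = 2A$ and
\[
\Phi_{pq}(\alpha_{p-1}) = \frac{1}{2A}\sum_{k=0}^{p-1} \alpha_{p-1}^{kq}, \qquad \Phi_r(\alpha_{p-1}) = \frac{A(\alpha_{p-1}^r - 1)}{A - 1};
\]
isolating the $\alpha_{p-1}^r$ contributions yields
\[
\Phi_r(\alpha_{p-1}) - \Phi_{pq}(\alpha_{p-1}) = \alpha_{p-1}^r \cdot \frac{3A - 1}{2A(A - 1)} - \frac{A}{A - 1} - \frac{1}{2A}\sum_{k=0}^{p-2} \alpha_{p-1}^{kq},
\]
and for $q > p$ an elementary comparison of exponents (the leading term has size $\alpha_{p-1}^{r - p + 1}$, whereas each subtracted term is smaller by at least a factor of $\alpha_{p-1}^{q - p}$) shows this quantity is strictly positive. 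Intermediate value then produces a real root $\beta(p, q) \in (\alpha_{p-1}, 2) \subset S$, and since $\alpha_{p-1} \le \beta(p, q) \le 2$ with $\alpha_{p-1} \to 2$, letting $p \to \infty$ yields a sequence in $S$ converging to $2$.

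The principal obstacle is unconditionality. Proving the existence, for each fixed odd prime $p$, of even a single prime $q > p$ with $(p-1)q - p$ also prime is qualitatively comparable in difficulty to the twin prime conjecture, and no sieve method currently available can yield such a result. Removing the conditional assumption would almost certainly require an entirely different construction --- perhaps one using indices with more than two prime factors, where sieve lower bounds on almost-primes could replace exact primality hypotheses, or one exploiting the analytic structure of $\Phi_m(x) - \Phi_n(x)$ in a way that bypasses the linear-forms condition altogether --- and identifying such a family is where I would expect the main difficulty to lie.
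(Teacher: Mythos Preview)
This statement is a \emph{conjecture}, and the paper does not prove it; what follows the conjecture in the paper is a heuristic discussion, not a proof. Your proposal and the paper's discussion rest on the same construction (triples of primes $p<q$, $r=pq-p-q$) and the same conditional hypothesis (Dickson's prime $k$-tuples conjecture), so at the structural level they coincide. The difference is in how the existence of a root near $\alpha_{p-1}$ is justified: the paper uses a near-factorization $\Phi_{pq}(x)-\Phi_r(x)=\psi_{p-1}(x)\,x^{\phi(pq)-\phi(p)}+\delta(x)$ together with Wilkinson's perturbation formula to \emph{estimate} the root location, whereas you evaluate $\Phi_{pq}-\Phi_r$ explicitly at $x=\alpha_{p-1}$ and at $x=2$ and invoke the Intermediate Value Theorem. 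Your route is more elementary and, once the ``comparison of exponents'' is written out, yields a rigorous (conditional) proof that a root lies in $(\alpha_{p-1},2)$; the paper's perturbation argument remains heuristic but gives finer quantitative information on how close the root is to $\alpha_{p-1}$. Your assessment of the unconditional obstacle matches the paper's; note, though, that the paper speculates that the weaker assertion---for infinitely many primes $p$ there exists at least one prime $q>p$ with $(p-1)q-p$ prime---``may be possible to prove unconditionally,'' which would already suffice.
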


This would follow from the above work if we could show that, for infinitely many primes $p$, there exists a prime $q>p$ such that $pq-p-q$ is also prime. This would follow, for instance, from Dickson's prime $k$-tuples conjecture, which says that several linear polynomials in $\ZZ[x]$ will be simultaneously prime infinitely often unless there is a congruential obstruction. In this case, for any fixed $p$, we apply this to the two polynomials $x$ and
$(p-1)x-p$, and the conjecture implies there should be infinitely many $x$ where both are prime.
 However, this is stronger than what we need.  Indeed, it suffices to prove  that for infinitely many primes $p$, there
 is at least one value of $x>p$ with both $x$ and $(p-1)x-p$ simultaneously prime.  It may be possible to prove
 this unconditionally. According to our calculations, this appears to be the only route to Conjecture \ref{con:limit}: all points in $S$ close to 2 appear to have this form.

On the other side, there are values of $m$ and $n$ such that $\Phi_m(x)-\Phi_n(x)$ has roots not far from $\pm\frac{1}{2}$. For instance, if $p$ is a large prime, then $\Phi_{3p}(x)-\Phi_4(x)$ has a root near $\rho:=-0.569840290998\cdots$, which is a root of $x^3+x^2+2x+1$.
In fact, as $p\to\infty$, the polynomials $\Phi_{3p}(x)-\Phi_4(x)$ have roots that converge to $\rho$
(and $\Phi_{6p}(x)-\Phi_4(x)$ have roots which converge to $-\rho$).
To see this, note that $\Phi_{3p}(x)=1-x+x^3-x^4+x^6-x^7+\cdots+x^{2p-5}-x^{2p-3}+x^{2p-2}$. As $p\to\infty$, these polynomials converge termwise to the power series $\sum_{n=0}^\infty (1-x)x^{3n}=\frac{1}{1+x+x^2}$. If $|x|<1$, then $\Phi_{3p}(x)\approx\frac{1}{1+x+x^2}$, so $\Phi_{3p}(x)-\Phi_4(x)$ has a root near that of $\frac{1}{1+x+x^2}-\Phi_4(x)$, i.e.,\ where $(1+x+x^2)(1+x^2)=1$. This means that $x^4+x^3+2x^2+x=0$. Curiously, roots of $\Phi_{4p}(x)-\Phi_3(x)$ also converge to the same number.  We can do better however.  The polynomial
$\Phi_{30}(x)-\Phi_4(x)$ has a root at $\sigma:=0.5284555592772\cdots$, and as the prime $p\to\infty$,
$\Phi_{30p}(x)-\Phi_{4p}(x)$ has a root that converges to $\sigma$.  Better still: Take $m$ as the product
of the first $k\ge3$ primes and $n$ as $\frac2{15}m$.  Then $\Phi_m(x)-\Phi_n(x)$ seems to have a root
converging to a number slightly below $0.52$.  For example, when $k=5$, there is a root at
$0.51976982658213\cdots$.
Perhaps the number $\frac12$ in
Theorem \ref{thm:principal} is best possible, but we do not have strong evidence either way.

Based on numerical computations, we present the following conjectures.

\begin{conj} For any distinct positive integers $m$ and $n$, if $z\in\CC\setminus\RR$ and $\Phi_m(z)=\Phi_n(z)$, then $\frac{1}{\sqrt{2}}<|z|\le\sqrt{2}$. The upper bound is attained only for $\{m,n\}=\{1,3\},\{1,4\},\{1,5\}$. \end{conj}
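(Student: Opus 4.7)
I would approach this in parallel with Section 4, promoting each result about real $x\ge 2$ to a statement about complex $z$ with $|z|\ge\sqrt 2$. The first task is to strengthen Proposition~\ref{prop:complex} at the smaller radius: establish that there exist explicit absolute constants $A,B>0$ with
\[
A\,|z|^{\phi(n)}\le|\Phi_n(z)|\le B\,|z|^{\phi(n)}
\]
for every $n\ge 1$ and every $|z|\ge\sqrt 2$. The starting point is \eqref{eq:fn}; taking absolute values and logs yields $\bigl|\log(|\Phi_n(z)|/|z|^{\phi(n)})\bigr|\le\sum_{d\mid n}\bigl|\log|1-z^{-d}|\bigr|\le\sum_{d\ge 1}|\log(1-2^{-d/2})|$, a convergent series. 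This is strictly weaker than the clean majorization in Lemma~\ref{prop:ineq} (which fails for $|z|<2$), so the resulting $A,B$ are worse than the $1/2$ and $2$ of Proposition~\ref{prop:complex}, but they are absolute and effective.

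Given such bounds, any coincidence $\Phi_m(z)=\Phi_n(z)$ with $|z|\ge\sqrt 2$ and $\phi(n)\ge\phi(m)$ forces $|z|^{\phi(n)-\phi(m)}\le B/A$, so $\phi(n)-\phi(m)\le K$ for an explicit $K$. Next I would apply Rouch\'e on the circle $|z|=r$ with $r>\sqrt 2$: when $\phi(n)-\phi(m)$ exceeds a threshold $K'(r)$ (with $K'(r)\to\infty$ as $r\searrow\sqrt 2$), the same inequality yields $|\Phi_n(z)|>|\Phi_m(z)|$ throughout $|z|=r$. Since $\Phi_n-\Phi_m$ is monic of degree $\phi(n)$ and $\Phi_n$ has all its zeros on the unit circle, Rouch\'e forces all $\phi(n)$ roots of $\Phi_n-\Phi_m$ to lie in $|z|<r$, hence none in $|z|>r$. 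Letting $r\searrow\sqrt 2$ eliminates roots with $|z|>\sqrt 2$ for every pair with $\phi(n)-\phi(m)$ sufficiently large.

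The remaining difficulty is the small-difference regime, and especially $\phi(m)=\phi(n)$, where $\Phi_m-\Phi_n$ has degree strictly less than $\phi(n)$ and the Rouch\'e argument above collapses. Here I would invoke Proposition~\ref{prop:recip}: when $\phi(m)=\phi(n)\ge 2$ (so both polynomials are reciprocal), the equation $\Phi_m(z)=\Phi_n(z)$ is equivalent to $\Phi_m(1/z)=\Phi_n(1/z)$, so ruling out coincidences with $|z|>\sqrt 2$ amounts to ruling out coincidences with $|w|<1/\sqrt 2$ where $w=1/z$ — the complex analog of Theorem~\ref{thm:gmnlemma}. This same reciprocal reasoning should simultaneously supply the strict lower bound $|z|>1/\sqrt 2$. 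For small but positive $\phi(n)-\phi(m)$, the expansion $\Phi_n(z)=z^{\phi(n)}-\mu(\rad(n))z^{\phi(n)-q(n)}+O(z^{\phi(n)-q(n)-1})$ should sharpen Rouch\'e on circles just above $\sqrt 2$.

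After these structural reductions only a finite list of pairs with $\phi(m),\phi(n)$ both small remains, to be checked by direct computation; it is here that the extremal triples $\{1,3\},\{1,4\},\{1,5\}$ surface, all of them involving the non-reciprocal polynomial $\Phi_1(z)=z-1$, which accounts for the asymmetry between the attained upper bound and the strict lower bound. I expect the hardest step to be the uniform $\phi(m)=\phi(n)$ reduction: cyclotomic coefficients can grow superpolynomially in $n$, so the complex analog of Theorem~\ref{thm:gmnlemma} will require tracking both magnitude and argument of each factor $(1-z^{-d})^{\mu(n/d)}$ in \eqref{eq:fn}, rather than magnitudes alone as in the real case.
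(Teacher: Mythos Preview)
The statement you are attempting to prove is labeled a \emph{Conjecture} in the paper; the authors offer no proof and present it only as a claim supported by numerical computation. There is therefore no argument in the paper to compare your proposal against.

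As a standalone strategy, your outline has a genuine gap. Your first reduction is sound: the series $\sum_{d\ge 1}|\log(1-2^{-d/2})|$ converges, so absolute constants $A,B$ with $A|z|^{\phi(n)}\le|\Phi_n(z)|\le B|z|^{\phi(n)}$ for $|z|\ge\sqrt 2$ do exist, and any coincidence there forces $\phi(n)-\phi(m)\le K$ for an explicit $K$. But this does \emph{not} reduce the problem to ``a finite list of pairs with $\phi(m),\phi(n)$ both small.'' For each fixed gap $c=\phi(n)-\phi(m)\in\{0,2,\ldots,K\}$ there remain infinitely many pairs $(m,n)$, and your Rouch\'e step on circles $|z|=r>\sqrt 2$ only re-derives the same bound on $c$ (since the threshold $K'(r)\to\infty$ as $r\searrow\sqrt 2$); it never bounds $\phi(m)$ or $\phi(n)$ individually. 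The reciprocity trick handles $c=0$ \emph{only if} you already possess a complex analogue of Theorem~\ref{thm:gmnlemma} on the disc $|w|\le 1/\sqrt 2$, which you correctly flag as the hardest step and do not prove. For $0<c\le K$ you gesture at the expansion $\Phi_n(z)=z^{\phi(n)}-\mu(\rad n)z^{\phi(n)-q(n)}+\cdots$, but sharpening Rouch\'e at radius $\sqrt 2$ this way would require uniform control of \emph{all} lower coefficients, and as you yourself note, cyclotomic coefficients can grow superpolynomially. In short, the proposal is a reasonable first reduction but leaves the infinite family of small-gap pairs essentially untouched---which is presumably why the statement remains a conjecture.
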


\begin{conj} \label{conj:nonreal} Let $S_{\CC}$ denote the set of all nonreal complex numbers $z$ such that $\Phi_m(z)=\Phi_n(z)$ for some distinct coprime positive integers $m$ and $n$. Then for any $\eps>0$, we have $1-\eps<|z|<1+\eps$ for all but finitely many elements of $S_{\CC}$. \end{conj}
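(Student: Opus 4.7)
The plan is to fix $\eps>0$ and show that only finitely many pairs $(m,n)$ with $m\neq n$ and $\gcd(m,n)=1$ produce a nonreal root of $\Phi_m-\Phi_n$ lying outside the annulus $1-\eps<|z|<1+\eps$; since each polynomial $\Phi_m-\Phi_n$ has finitely many roots, this implies the conjecture. First I would reduce to $|z|\ge 1+\eps$, mapping the range $|z|\le 1-\eps$ to $|1/z|\ge 1/(1-\eps)$ via the reciprocity identity $\Phi_n(z)=z^{\phi(n)}\Phi_n(1/z)$ of Proposition \ref{prop:recip}, together with a homogenization step of the kind used in the proof of Theorem \ref{thm:valueat2} to absorb the factor $z^{\phi(m)-\phi(n)}$ when $\phi(m)\neq\phi(n)$. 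A variant of Proposition \ref{prop:complex} valid for $|z|\ge 1+\eps$ with constants depending on $\eps$ (in place of the sharp constants $\tfrac12$ and $2$) would then bound $|\phi(m)-\phi(n)|$ by some $C(\eps)$, leaving only finitely many $\phi$-difference classes to analyse.

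Setting $p_n(z):=\Phi_n(z)/z^{\phi(n)}=\prod_{d\mid n}(1-z^{-d})^{\mu(n/d)}$ and expanding the logarithm, for $|z|>1$ one obtains the identity
\[
\log p_n(z)=-\sum_{j\ge 1}\frac{c_n(j)}{j}z^{-j},
\]
where $c_n(j)=\sum_{d\mid\gcd(n,j)}\mu(n/d)d$ is the classical Ramanujan sum. The equation $\Phi_m(z)=\Phi_n(z)$ is therefore equivalent, for $|z|>1$, to
\[
(\phi(m)-\phi(n))\log z=\sum_{j\ge 1}\frac{c_n(j)-c_m(j)}{j}z^{-j}.
\]
Within each of the finitely many $\phi$-difference classes from the previous step, I would analyse this equation using the fact that $c_n(j)-c_m(j)=\mu(n)-\mu(m)$ whenever $\gcd(j,mn)=1$; the coprimality hypothesis $\gcd(m,n)=1$ prevents this ``main'' index set from disappearing entirely. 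The goal is to show that the right-hand side, viewed as an analytic function of $z$ on $|z|\ge 1+\eps$, cannot match the left-hand side at a nonreal $z$ in that annulus, except for finitely many pairs $(m,n)$.

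The principal obstacle will be precisely this last step. The auxiliary Ramanujan sums $c_m(j),c_n(j)$ for $j$ sharing prime factors with $mn$ can be moderately large and might in principle conspire to cancel the main $\mu(n)-\mu(m)$ contribution at a specific $z$, and the density of indices coprime to $mn$ can be as small as $1/\log\log(mn)$, so the main term is not uniformly bounded below. Compounding this, the near-miss analysis of the preceding section shows that $\Phi_m-\Phi_n$ genuinely has roots arbitrarily close to $|z|=2$ along the real axis, conditionally on the prime $k$-tuples conjecture; any proof of Conjecture \ref{conj:nonreal} must therefore use the hypothesis $z\notin\RR$ in an essential way, perhaps via an argument-principle or equidistribution argument showing that the Laurent tail above has imaginary part bounded away from $0$ outside a neighborhood of $\RR$. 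I expect a complete proof will require substantial arithmetic input beyond the tools developed here.
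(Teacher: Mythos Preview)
The statement you are attempting is labeled a \emph{conjecture} in the paper, and the paper supplies no proof of it. There is therefore nothing to compare your proposal against: the authors present Conjecture~\ref{conj:nonreal} as open, supported only by numerical evidence and the heuristic remark that random polynomials have almost all their roots near the unit circle. Your own final sentence already concedes this, so what you have written is not a proof but an outline of where one might look.

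On the substance of your outline: the reduction from $|z|\le 1-\eps$ to $|z|\ge 1+\eps'$ via Proposition~\ref{prop:recip} is not as clean as you suggest. Reciprocity gives $\Phi_m(z)=\Phi_n(z)\iff z^{\phi(n)-\phi(m)}\Phi_m(1/z)=\Phi_n(1/z)$, which is a different equation unless $\phi(m)=\phi(n)$; your ``homogenization step'' would need to be made precise, and it is not obvious that the resulting family of equations is any more tractable. More seriously, the step where you hope a variant of Proposition~\ref{prop:complex} bounds $|\phi(m)-\phi(n)|$ only reduces to finitely many \emph{values} of $\phi(m)-\phi(n)$, not finitely many pairs $(m,n)$: for each fixed difference there remain infinitely many coprime pairs, and your Ramanujan-sum identity then has infinitely many instances to control. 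You correctly flag that the hypothesis $z\notin\RR$ must enter essentially, since the real near-miss examples in the preceding section would otherwise defeat any such argument, but you offer no mechanism for exploiting nonreality. As it stands, the proposal identifies the right obstacles without surmounting any of them; the conjecture remains open.
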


Without the coprime hypothesis Conjecture \ref{conj:nonreal} is likely to be false. 
To see this, note that if $m$ and $n$ are both odd and $\alpha$ is a positive real root of $\Phi_m(x)-\Phi_n(x)$, then $i\alpha^{1/2}$ is a nonreal root of $\Phi_{4m}(x)-\Phi_{4n}(x)$. 
Since presumably polynomials of the form $\Phi_m(x)-\Phi_n(x)$ can have real roots arbitrarily close to 2, this implies that $\Phi_{4m}(x)-\Phi_{4n}(x)$ can have roots arbitrarily close to $\sqrt{-2}$.

However, there are infinitely many real roots bounded away from $\pm 1$.
 Thus we see that apparently there is a significant behavioral difference between the real and nonreal roots of differences of cyclotomic polynomials.

The observed behavior of roots of $\Phi_m(x)-\Phi_n(x)$ is consistent with typical behavior of random polynomials whose coefficients are each chosen uniformly in some large interval. Let $d$ be a large positive integer and $B$ a large positive real number, and let $f(x)$ be a degree-$d$ polynomial in $\RR[x]$ whose coefficients are chosen uniformly and independently from the interval $[-B,B]$. Then it is known (see~\cite{HN08}) that all but $o(d)$ of the roots of $f$ are asymptotically almost surely very close to the unit circle.

On the other hand, the behavior of the real roots, of which there are $o(d)$, behave rather differently. Kac in~\cite{Kac49} showed that the expected number of real roots is $\frac{2}{\pi}\log d$. Similarly, Littlewood and Offord (see~\cite{LO38,LO43,LO45,LO48}) proved that for almost all $f$ (with coefficients chosen independently from any of several different distributions), the number $r_f$ of real roots satisfies \[\frac{\log n}{\log\log\log n}\ll r_f\ll\log^2n.\] Kac also showed that for any $\alpha\in(0,1)$, the expected number of real roots in the range $(0,\alpha)$ is $O(1)$, but not 0.

\section*{Acknowledgments}
We thank Gerry Myerson and Tim Trudgian for bringing Glasby's conjectures to our attention.
We also thank Kevin Ford for reminding us of \cite{BFLPS05}. This project was started at the West Coast Number Theory Conference in Chico, California, in December 2018.

\bibliographystyle{alphaurl}
\bibliography{cyclotomic}

\end{document}